\documentclass[fleqn,preprint,3p,a4paper]{elsarticle}




\usepackage{amssymb}
\usepackage{amsmath}
\usepackage{amsthm}
\usepackage{dcolumn}
\usepackage{endnotes}
\usepackage{tabularx}
\usepackage[matrix,arrow]{xy}
\usepackage{wasysym}
\usepackage{float}
\usepackage{graphicx}

\newtheorem{theorem}{Theorem}[section]
\newtheorem{proposition}[theorem]{Proposition}
\newtheorem{lemma}[theorem]{Lemma}
\newtheorem{corollary}[theorem]{Corollary}

\theoremstyle{definition}
\newtheorem{definition}[theorem]{Definition}
\newtheorem{example}[theorem]{Example}
\newtheorem{remark}[theorem]{Remark}
\newtheorem{question}[theorem]{Question}



\newcommand{\ir}{{\mathsf{Irr}}}

\newcommand{\mn}{\mathbb N}

\newcommand{\cl}{{\rm cl}}

\newcommand{\ua}{\mathord{\uparrow}}
\newcommand{\da}{\mathord{\downarrow}}

\begin{document}

\begin{frontmatter}



\journal{ }

\title{On function spaces related to H-sober spaces\tnoteref{t1}}
\tnotetext[t1]{This research is supported by the National Natural Science Foundation of China (Nos. 12071199, 11661057) and the Natural Science Foundation of Jiangxi Province (No. 20192ACBL20045).}


\author[M. Bao]{Meng Bao}
\ead{mengbao95213@163.com}
\address[M. Bao]{College of Mathematics, Sichuan University, Chengdu 610064, China}

\author[X. Zhang]{Xiaoyuan Zhang}
\ead{zxy0407010229@163.com}
\address[X. Zhang]{College of Mathematics,
Sichuan University, Chengdu 610064, China\\
School of Big Data Science, Hebei Finance University, Baoding 071051, China}

\author[X. Xu]{Xiaoquan Xu\corref{mycorrespondingauthor}}
\cortext[mycorrespondingauthor]{Corresponding author.}
\ead{xiqxu2002@163.com}
\address[X. Xu]{Fujian Key Laboratory of Granular Computing and Applications, Minnan Normal University, Zhangzhou 363000, China}

\begin{abstract}
In this paper, we mainly study the function spaces related to H-sober spaces. For an irreducible subset system H and $T_{0}$ spaces $X$ and $Y$, it is proved that $Y$ is H-sober iff the function space $\mathbb{C}(X, Y)$ of all continuous functions $f : X\longrightarrow Y$ equipped with the topology of pointwise convergence is H-sober iff the function space $\mathbb{C}(X, Y)$ equipped with the Isbell topology is H-sober. One immediate corollary is that for a $T_{0}$ space $X$, $Y$ is a sober space (resp., $d$-space, well-filtered space) iff the function space $\mathbb{C}(X, Y)$ equipped with the topology of pointwise convergence is a sober space (resp., $d$-space, well-filtered space) iff the function space $\mathbb{C}(X, Y)$ equipped with the the Isbell topology is a sober space (resp., $d$-space, well-filtered space). It is shown that $T_{0}$ spaces $X$ and $Y$, if the function space $\mathbb{C}(X, Y)$ equipped with the compact-open topology is H-sober, then $Y$ is H-sober. The function space $\mathbb{C}(X, Y)$ equipped with the Scott topology is also discussed.
\end{abstract}

\begin{keyword}
Function space; H-sober space; Pointwise convergence topology; Compact-open topology; Isbell topology; Scott topology

\MSC 54C35; 54D99; 06B30, 06B35

\end{keyword}

\end{frontmatter}

\section{Introduction}
Function spaces (equipped with certain topologies) are important structures in topology and domain theory, which was initially introduced by Dana Scott \cite{Sc}. As a special kind of mathematical structures, domains serve as mathematical
universes within which people can interpret higher-order functional programming languages and cartesian closed categories of domains (more generally, certain topological spaces) are appropriate for models of various typed and untyped lambda-calculi
and functional programming languages (see \cite{GHKLMS}). Since whether certain properties of topological spaces are preserved when passing to function spaces is connected with the cartesian closed category of topological spaces, this question has attracted considerable attention in domain theory and non-Hausdorff topology, especially for domains (which are a special kind of topological spaces when endowed with the Scott topology), sober spaces, $d$-spaces and well-filtered spaces.

 There exists a quite satisfactory theory which deals with the cartesian closedness
of domains. Jung \cite{Jung89, Jung90}, Plotkin \cite{Plotkin76} and Smyth \cite{Smyth83} have made essential contributions to this theory. For topological spaces, it is well-known that if $X$ is a $T_0$ and $Y$ a sober space, then the function space $\mathbb{C}(X, Y)$ of all continuous functions $f:X\rightarrow Y$ equipped with the topology of pointwise convergence is sober (see, for example, \cite[Exercise O-5.16]{GHKLMS}). Furthermore, in \cite{ES}, it was shown that for any $T_{0}$ space $X$, a $T_0$ space $Y$ is a $d$-space (resp., sober space) iff the the function space $\mathbb{C}(X, Y)$ equipped with the topology of pointwise convergence is a $d$-space (resp., sober space). It is known that for a $T_0$ space $X$ and a $d$-space $Y$, the function space $\mathbb{C}(X, Y)$ equipped with the Isbell topology is a $d$-space (cf. \cite[Lemma II-4.3]{GHKLMS}). Conversely, in \cite{LLH}, the authors shown that if the function space $\mathbb{C}(X, Y)$ equipped with the Isbell topology is a $d$-space, then $Y$ is a $d$-space. For the well-filteredness, it was proved in \cite{LLH} that for any core compact space $X$ and well-filtered space $Y$, the function space $\mathbb{C}(X, Y)$ equipped with the Isbell topology is well-filtered.

 In order to provide a uniform approach to $d$-spaces, sober spaces and well-filtered spaces, Xu \cite{Xu2} introduced the concepts of irreducible subset system H and H-sober spaces and developed a general framework for dealing with all these spaces. The irreducible subset systems $\mathcal D$, $\mathcal R$, $\mathsf{WD}$ and $\mathsf{RD}$ are four important ones, where for each $T_0$ space $X$, $\mathcal D(X)$ is the set of all directed subsets of $X$, $\mathcal R(X)$ is the set of all irreducible subsets of $X$, $\mathsf{WD}(X)$ is the set of all well-filtered determined subsets of $X$ and $\mathsf{RD}(X)$ is the set of all Rudin subsets of $X$ (see \cite{Xu2, XSXZ}). So the $d$-spaces, sober spaces and well-filtered spaces are three special types of H-sober spaces. It was proved in \cite{Xu2} that for a $T_0$ space $X$ and an H-sober space $Y$, the function space $\mathbb{C}(X, Y)$ equipped with the topology of pointwise convergence is H-sober. One immediate corollary is that for a $T_0$ space $X$ and a sober space (resp., $d$-space, well-filtered space) $Y$, the function space $\mathbb{C}(X, Y)$ equipped with the topology of pointwise convergence is a sober space (resp., $d$-space, well-filtered space).

In this paper, we mainly study the function spaces related to H-sober spaces. For an irreducible subset system H and $T_{0}$ spaces $X$ and $Y$, it is proved that the following three conditions are equivalent:
\begin{enumerate}[\rm (1)]
\item $Y$ is H-sober.
 \item The function space $\mathbb{C}(X, Y)$ equipped with the pointwise convergence topology is H-sober.
  \item The function space $\mathbb{C}(X, Y)$ equipped with the Isbell topology is H-sober.
  \end{enumerate}

  \noindent Applying this result directly to the irreducible subset systems $\mathcal D$, $\mathcal R$ and $\mathsf{WD}$ (or $\mathsf{RD}$), we get the conclusion that for any $T_{0}$ spaces $X$ and $Y$, the following three conditions are equivalent: (1) $Y$ is a sober space (resp., $d$-space, well-filtered space); (2) the function space $\mathbb{C}(X, Y)$ equipped with the topology of pointwise convergence is a sober space (resp., $d$-space, well-filtered space); (3) the function space $\mathbb{C}(X, Y)$ equipped with the Isbell topology is a sober space (resp., $d$-space, well-filtered space). This conclusion improves the result in \cite[Corollary 3.12]{LLH} that for any core compact space $X$, if $Y$ is well-filtered, then the function space $\mathbb{C}(X, Y)$ equipped with the Isbell topology is well-filtered. It is shown that for $T_{0}$ spaces $X$ and $Y$, if the function space $\mathbb{C}(X, Y)$ equipped with the compact-open topology is H-sober, then $Y$ is H-sober. Therefore, for any $T_{0}$ spaces $X$ and $Y$, $Y$ is a $d$-space iff the function space $\mathbb{C}(X, Y)$ equipped with the compact-open topology is a $d$-space. The function space $\mathbb{C}(X, Y)$ equipped with the Scott topology is also discussed.

\section{Preliminaries}
In this section, we briefly recall some fundamental concepts and notations that will be used in this paper, more details can be founded in \cite{RE, GHKLMS, GL}.

For a poset $P$ and $A \subseteq P$, let $\ua A=\{x\in P: a\leq x \mbox{ for some }a\in A \}$ (dually $\da A=\{x \in P: x \leq a \mbox{ for some }a\in A\})$. For $A=\{x\}$, $\ua A$ and $\da A$ are shortly denoted by $\ua x$ and $\da x$ respectively. A subset $A$ is called a \emph{lower set} (resp., an \emph{upper set}) if $A=\da A$ (resp., $A=\ua A$). Let $P^{(<\omega)}=\{F\subseteq P : F \mbox{~is a finite set}\}$ and $\mathbf{Fin} P=\{\uparrow F : \emptyset\neq F\in P^{(<\omega)}\}$. A subset $D$ of $P$ is \emph{directed} provided that it is nonempty and every finite subset of $D$ has an upper bound in $D$. The set of all directed sets of $P$ is denoted by $\mathcal{D}(P)$. $P$ is said to be a \emph{directed complete poset}, a \emph{dcpo} for short, if every directed subset of $P$ has the least upper bound in $P$. As in \cite{GHKLMS}, the \emph{upper topology} on a poset $P$, generated
by the complements of the principal ideals of $P$, is denoted by $\upsilon (P)$. The upper sets of $P$ form the (\emph{upper}) \emph{Alexandroff topology} $\gamma (P)$. The space $\Gamma~\!\!P=(P, \gamma(P))$ is called the \emph{Alexandroff space} of $P$. A subset $U$ of a poset $P$ is called \emph{Scott open} if $U=\mathord{\uparrow} U$ and $D\cap U\neq\emptyset$ for all directed sets $D\subseteq P$ with $\vee D\in U$ whenever $\vee D$ exists. The topology formed by all Scott open sets of $P$ is called the \emph{Scott topology}, written as $\sigma(P)$. $\Sigma P=(P, ~\!\sigma(P))$ is called the \emph{Scott space} of $P$. Clearly, $\Sigma P$ is a $T_0$ space.

Given a $T_0$ space $X$, we can define a partially order $\leq_{X}$, called the \emph{specialization order}, which is defined by $x \leq_X y$ iff $x \in \overline{\{y\}}$. Let $\Omega X$ denote the poset $(X, \leq_X)$. Clearly, each open set is an upper set and each closed set is a lower set with respect to the partially order $\leq_{X}$. Unless otherwise stated, throughout the paper, whenever an order-theoretic concept is mentioned in a $T_0$ space, it is to be interpreted with respect to the specialization order. Let $\mathcal{O}(X)$ (resp., $\mathcal{C}(X)$) be the set of all open subsets (resp., closed subsets) of $X$ and denote $\mathcal{S}(X)=\{\{x\}: x\in X\}$, $\mathcal{D}(X)=\{D\subseteq X: D$ is a directed set of $X\}$. A {\it retract} of $X$ is a topological space $Y$ such that there are two continuous mappings $f:X\rightarrow Y$ and $g:Y\rightarrow X$ such that $f\circ g=id_{Y}$.

The category of all sets and mappings is denoted by $\mathbf{Set}$ and the category of all $T_0$ spaces with continuous mappings is denoted by $\mathbf{Top}_0$. A $T_0$ space $X$ is called a $d$-\emph{space} (or \emph{monotone convergence space}) if $X$ (with the specialization order) is a dcpo and $\mathcal O(X) \subseteq \sigma(X)$ (cf.\cite{GHKLMS}). Clearly, for a dcpo $P$, $\Sigma~\!\! P$ is a $d$-space.

One can directly get the following result (cf. \cite{GHKLMS, XSXZ}).

\begin{proposition}\label{2mt1} For a $T_0$ space $X$, the following conditions are equivalent:
\begin{enumerate}[\rm (1)]
	        \item $X$ is a $d$-space.
            \item $\mathcal D_c(X)=\mathcal S_c(X)$, that is, for each $D\in \mathcal D(X)$, there exists a \emph{(}unique\emph{)} point $x\in X$ such that $\overline{D}=\overline{\{x\}}$.
\end{enumerate}
\end{proposition}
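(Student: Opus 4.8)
The plan is to establish the two implications separately, using repeatedly that in a $T_0$ space one has $\overline{\{x\}}=\da x$ for every $x$ and that every closed subset of $X$ is a lower set for $\leq_X$. Here $\mathcal D_c(X)$ denotes $\{\overline D:D\in\mathcal D(X)\}$ and $\mathcal S_c(X)=\{\overline{\{x\}}:x\in X\}$, so condition $(2)$ asserts exactly that the closure of every directed set is the closure of a singleton.

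For $(1)\Rightarrow(2)$ I would argue as follows. Assume $X$ is a $d$-space and fix $D\in\mathcal D(X)$; since $\Omega X$ is a dcpo, $x:=\bigvee D$ exists. Every $d\in D$ satisfies $d\leq_X x$, hence $D\subseteq\overline{\{x\}}$ and so $\overline D\subseteq\overline{\{x\}}$. For the reverse inclusion it suffices to show $x\in\overline D$; if not, then $x$ lies in the open set $X\setminus\overline D$, which is Scott open because $\mathcal O(X)\subseteq\sigma(X)$, and then $\bigvee D=x\in X\setminus\overline D$ forces $D\cap(X\setminus\overline D)\neq\emptyset$, contradicting $D\subseteq\overline D$. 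Thus $\overline{\{x\}}\subseteq\overline D$, equality holds, and uniqueness of $x$ follows from the antisymmetry of $\leq_X$, i.e.\ from $T_0$-separation.

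For $(2)\Rightarrow(1)$ I would first check that $\Omega X$ is a dcpo: given $D\in\mathcal D(X)$, choose $x$ with $\overline D=\overline{\{x\}}$; from $D\subseteq\overline{\{x\}}=\da x$ we see $x$ is an upper bound of $D$, and if $y$ is any upper bound then $D\subseteq\da y=\overline{\{y\}}$, whence $\overline{\{x\}}=\overline D\subseteq\overline{\{y\}}$ and $x\leq_X y$, so $x=\bigvee D$. Next I would show $\mathcal O(X)\subseteq\sigma(X)$: given $U\in\mathcal O(X)$, which is already an upper set, and $D\in\mathcal D(X)$ with $\bigvee D\in U$, the previous step identifies $\bigvee D$ with the point $x$ satisfying $\overline D=\overline{\{x\}}$, so $x\in\overline{\{x\}}=\overline D$; if $D\cap U=\emptyset$ then $D\subseteq X\setminus U$, a closed set, giving $x\in\overline D\subseteq X\setminus U$ and contradicting $x\in U$. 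Hence $D\cap U\neq\emptyset$, so $U\in\sigma(X)$ and $X$ is a $d$-space.

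I do not expect a genuine obstacle here; the one point that needs care is the asymmetry between ``closed set'' and ``lower set'' — every closed set is a lower set, but not conversely — which is exactly why the hypothesis $\mathcal O(X)\subseteq\sigma(X)$ is needed in order to pass between the topological statement $x\in\overline D$ and the order-theoretic statement $x=\bigvee D$ in both directions.
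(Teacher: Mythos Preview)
Your proof is correct and is the standard direct argument. The paper itself does not give a proof of this proposition; it simply states ``One can directly get the following result (cf.\ \cite{GHKLMS, XSXZ})'' and moves on, so there is nothing in the paper to compare your argument against beyond noting that your proof is exactly the routine verification those references would supply.
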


A nonempty subset $A$ of a $T_0$ space $X$ is called \emph{irreducible} if for any $\{F_1, F_2\}\subseteq \mathcal{C}(X)$, $A\subseteq F_1\cup F_2$ implies $A\subseteq F_1$ or $A\subseteq F_2$. We denote by $\ir(X)$ (resp., $\ir_c(X)$) the set of all irreducible (resp., irreducible closed) subsets of $X$. Clearly, every subset of $X$ that is directed under $\leq_{X}$ is irreducible. A topological space $X$ is called \emph{sober}, if for any $ F\in\ir_c(X)$, there is a unique point $x\in X$ such that $F=\overline{\{x\}}$.

For a topological space $X$, $\mathcal{G}\subseteq 2^{X}$ and $A\subseteq X$, let $\Diamond_{\mathcal{G}}A=\{G\in \mathcal{G}: G\cap A\neq\emptyset\}$ and $\Box_{\mathcal{G}}A=\{G\in \mathcal{G}: G\subseteq A\}$. The sets $\Diamond_{\mathcal{G}}A$ and $\Box_{\mathcal{G}}A$ will be simply written as $\Diamond A$ and $\Box A$ respectively if there is no confusion. The \emph{lower Vietoris topology} on $\mathcal{G}$ is the topology that has $\{\Diamond U: U\in \mathcal{O}(X)\}$ as a subbase, and the resulting space is denoted by $P_{H}(\mathcal{G})$. If $\mathcal{G}\subseteq \ir (X)$, then $\{\Diamond_{\mathcal{G}}U: U\in \mathcal{O}(X)\}$ is a topology on $\mathcal{G}$.

\begin{remark} \label{2zj1} Let $X$ be a $T_0$ space.
\begin{enumerate}[\rm (1)]
	\item If $\mathcal{S}_c(X)\subseteq \mathcal{G}$, then the specialization order on $P_H(\mathcal{G})$ is the order of set inclusion, and the \emph{canonical mapping} $\eta_{X}: X\longrightarrow P_H(\mathcal{G})$, given by $\eta_X(x)=\overline {\{x\}}$, is an order and topological embedding.
    \item The space $X^s=P_H(\ir_c(X))$ with the canonical mapping $\eta_{X}: X\longrightarrow X^s$ is the \emph{sobrification} of $X$ (cf. \cite{GHKLMS, GL}).
\end{enumerate}
\end{remark}

For a $T_0$ space $X$, let $2^{X}$ be the set of all subsets of $X$. A subset $A$ of $X$ is called \emph{saturated} if $A$ equals the intersection of all open sets containing it (equivalently, $A$ is an upper set in the specialization order). Let $\mathcal S^u(X)$ denote the set of all principal filters, namely, $\mathcal S^u(X)=\{\ua x : x\in X\}$. We denote by $\mathord{\mathsf{K}}(X)$ the poset of nonempty compact saturated subsets of $X$ with the \emph{Smyth preorder}, i.e., for $K_{1}, K_{2}\in \mathord{\mathsf{K}}(X)$, $K_{1}\sqsubseteq K_{2}$ iff $K_{2}\subseteq K_{1}$.  A $T_0$ space $X$ is called \emph{well-filtered}, if for any open set $U$ and any filtered family $\mathcal{K}\subseteq \mathord{\mathsf{K}}(X)$, $\bigcap\mathcal{K}{\subseteq} U$ implies $K{\subseteq} U$ for some $K{\in}\mathcal{K}$. We consider the upper Vietoris topology on $\mathord{\mathsf{K}}(X)$, generated by the sets $\Box U=\{K\in \mathord{\mathsf{K}}(X): K\subseteq U\}$, where $U$ ranges over the open subsets of $X$. The resulting space is called the \emph{Smyth power space} or \emph{upper space} of $X$ and denoted by $P_{S}(X)$.

\begin{remark} \label{2zj2} Let $X$ be a $T_0$ space. Then
\begin{enumerate}[\rm (1)]
	\item the specialization order on $P_{S}(X)$ is the Smyth order, that is, $\leq_{P_{S}(X)}=\sqsubseteq$.
    \item the \emph{canonical mapping} $\xi_{X}: X\longrightarrow P_{S}(X), x \mapsto \ua x$, is an order and topological embedding (cf. \cite{Schalk}).
\end{enumerate}
\end{remark}

\section{Topological Rudin Lemma and H-sober spaces}

Rudin's Lemma plays a crucial role in domain theory and is a useful tool in studying the various aspects of well-filtered spaces and sober spaces. In \cite{HK}, Heckman and Keimel presented the following topological variant of Rudin's Lemma.

\begin{lemma}\label{2yl4} \emph{(Topological Rudin Lemma)}  Let $X$ be a topological space and $\mathcal{A}$ an
irreducible subset of the Smyth power space $P_S(X)$. Then every closed set $C \subseteq X$  that
meets all members of $\mathcal{A}$ contains a minimal irreducible closed subset $A$ that still meets all
members of $\mathcal{A}$.
\end{lemma}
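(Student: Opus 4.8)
The plan is to produce the desired minimal irreducible closed set by a Zorn's Lemma argument applied to an appropriate family of closed subsets of $C$, and then to derive its irreducibility from the assumed irreducibility of $\mathcal{A}$ in $P_S(X)$.

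First I would fix a closed set $C\subseteq X$ meeting every member of $\mathcal{A}$ and consider the family $\mathcal{C}^{*}$ of all closed subsets $F$ of $X$ such that $F\subseteq C$ and $F\cap K\neq\emptyset$ for every $K\in\mathcal{A}$, ordered by inclusion; it is nonempty, since $C\in\mathcal{C}^{*}$. To apply Zorn's Lemma and obtain a minimal element, I must check that every chain $\{F_i\}_{i\in I}$ in $\mathcal{C}^{*}$ has a lower bound, the natural candidate being $\bigcap_{i\in I}F_i$ (the empty chain being covered by nonemptiness of $\mathcal{C}^{*}$). The only nontrivial point is that this intersection still meets each $K\in\mathcal{A}$: since $K$ is compact and the sets $F_i\cap K$ form a filtered (indeed totally ordered) family of nonempty closed subsets of $K$, the finite intersection property gives $\big(\bigcap_{i\in I}F_i\big)\cap K=\bigcap_{i\in I}(F_i\cap K)\neq\emptyset$. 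Hence Zorn's Lemma yields a minimal $A\in\mathcal{C}^{*}$; in particular $A$ is a closed subset of $C$ meeting every member of $\mathcal{A}$, and $A\neq\emptyset$ because members of $\mathsf{K}(X)$ are nonempty.

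It remains to show that $A$ is irreducible. If not, then since $A\neq\emptyset$ there are closed sets $A_1,A_2$ of $X$ with $A=A_1\cup A_2$, $A_1\subsetneq A$ and $A_2\subsetneq A$ (if $A\subseteq F_1\cup F_2$ with $F_1,F_2$ closed in $X$ and $A\not\subseteq F_j$ for $j=1,2$, put $A_j=A\cap F_j$). By minimality of $A$, neither $A_1$ nor $A_2$ lies in $\mathcal{C}^{*}$, so there are $K_1,K_2\in\mathcal{A}$ with $K_1\cap A_1=\emptyset$ and $K_2\cap A_2=\emptyset$; equivalently $K_j\in\Box(X\setminus A_j)$, so each basic open set $\Box(X\setminus A_j)$ of $P_S(X)$ meets $\mathcal{A}$. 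Using the equivalent form of irreducibility (a nonempty set is irreducible iff any two open sets meeting it have a common point in it) together with $\Box(X\setminus A_1)\cap\Box(X\setminus A_2)=\Box\big(X\setminus(A_1\cup A_2)\big)=\Box(X\setminus A)$, we conclude that $\Box(X\setminus A)$ meets $\mathcal{A}$; but any $K$ in this intersection satisfies $K\subseteq X\setminus A$, i.e. $K\cap A=\emptyset$, contradicting $A\in\mathcal{C}^{*}$. Thus $A$ is irreducible, and the lemma follows.

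I expect the main obstacle to be the Zorn's Lemma step, specifically verifying that an intersection of a chain from $\mathcal{C}^{*}$ remains in $\mathcal{C}^{*}$; this is exactly where the compactness of the members of $\mathcal{A}$ (they are compact saturated subsets of $X$) is used. The passage to irreducibility is short once the identity $\Box(X\setminus A_1)\cap\Box(X\setminus A_2)=\Box(X\setminus A)$ is recorded and the open-set formulation of irreducibility of $\mathcal{A}$ in $P_S(X)$ is invoked.
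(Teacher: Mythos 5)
The paper does not prove this lemma; it is stated as a quoted result of Heckmann and Keimel \cite{HK}. Your argument is correct and complete, and it is essentially the standard proof from that source: Zorn's Lemma applied (downward) to the family of closed subsets of $C$ meeting every member of $\mathcal{A}$, with compactness of each $K\in\mathcal{A}$ guaranteeing that chains have nonempty intersections, followed by the observation that $\Box(X\setminus A_1)\cap\Box(X\setminus A_2)=\Box(X\setminus A)$ combined with the open-set formulation of irreducibility of $\mathcal{A}$ in $P_S(X)$ forces a minimal element to be irreducible.
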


For a $T_0$ space $X$ and $\mathcal{K}\subseteq \mathord{\mathsf{K}}(X)$, let $M(\mathcal{K})=\{A\in \mathcal{C}(X): K\cap A\neq\emptyset \mbox{~for all~} K\in \mathcal{K}\}$ (that is, $\mathcal K\subseteq \Diamond A$) and $m(\mathcal{K})=\{A\in \mathcal{C}(X): A \mbox{~is a minimal menber of~} M(\mathcal{K})\}$.

\begin{definition}\label{2dy10} (\cite{SXXZ, XSXZ})
Let $X$ be a $T_0$ space and $A$ a nonempty subset of $X$.
\begin{enumerate}[\rm (1)]
\item $A$ is said to have the \emph{Rudin property} (which is called \emph{compactly filtered property} in \cite{SXXZ}), if there exists a filtered family $\mathcal K\subseteq \mathord{\mathsf{K}}(X)$ such that $\overline{A}\in m(\mathcal{K})$ (that is, $\overline{A}$ is a minimal closed set that intersects all members of $\mathcal{K}$). Let $\mathsf{RD}(X)=\{A\subseteq X: A \mbox{{~has Rudin property}}\}$ and $\mathsf{RD}_{c}(X)=\{\overline{A}: A \in \mathsf{RD}(X)\}$.  The sets in $\mathsf{RD}(X)$ will also be called \emph{Rudin sets}. Obviously, a subset $A$ of a space $X$ has Rudin property iff $\overline{A}$ has Rudin property.
\item $A$ is called a \emph{well-filtered determined set} if for any continuous mapping $f:X\longrightarrow Y$ into a well-filtered space $Y$, there exists a unique $y_{A}\in Y$ such that $\overline{f(A)}=\overline{\{y_{A}\}}$. Denote by $\mathsf{WD}(X)$ the set of all well-filtered determined subsets of $X$ and $\mathsf{WD}_{c}(X)=\{\overline{A}: A\in \mathsf{WD}(X)\}$. Obviously, a subset $A$ of a space $X$ is well-filtered determined iff $\overline{A}$ is well-filtered determined.
\end{enumerate}
\end{definition}

\begin{proposition}\label{2mt6}\emph{(\cite[Proposition 6.2]{XSXZ})} Let $X$ be a $T_{0}$ space. Then $\mathcal{S}(X)\subseteq \mathcal{D}(X)\subseteq \mathsf{RD}(X)\subseteq \mathsf{WD}(X)\subseteq \ir(X)$.
\end{proposition}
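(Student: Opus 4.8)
The statement is the chain $\mathcal S(X)\subseteq\mathcal D(X)\subseteq\mathsf{RD}(X)\subseteq\mathsf{WD}(X)\subseteq\ir(X)$, and the plan is to prove the four inclusions separately. The inclusion $\mathcal S(X)\subseteq\mathcal D(X)$ is immediate, since $\{x\}$ is nonempty and every finite subset of it is bounded above by $x$. For $\mathcal D(X)\subseteq\mathsf{RD}(X)$, given $D\in\mathcal D(X)$ I would take the family $\mathcal K=\{\ua d:d\in D\}$: each $\ua d$ is saturated and compact (in any open cover of $\ua d$ the member containing $d$ already contains $\ua d$, as open sets are upper sets), so $\mathcal K\subseteq\mk(X)$; directedness of $D$ makes $\mathcal K$ filtered; $d\in\overline D\cap\ua d$ shows $\overline D\in M(\mathcal K)$; and any closed $B\subseteq\overline D$ meeting every $\ua d$ contains every $d$, being a lower set, so $B=\overline D$. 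Hence $\overline D\in m(\mathcal K)$ and $D\in\mathsf{RD}(X)$.

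The crux is $\mathsf{RD}(X)\subseteq\mathsf{WD}(X)$. Fix $A\in\mathsf{RD}(X)$ witnessed by a filtered family $\mathcal K\subseteq\mk(X)$ with $\overline A\in m(\mathcal K)$, and let $f\colon X\to Y$ be continuous into a well-filtered space $Y$; the goal is a unique $y_A\in Y$ with $\overline{f(A)}=\overline{\{y_A\}}$. The key idea is that the family to push forward along $f$ is not $\{\ua_Y f(K):K\in\mathcal K\}$ but $\mathcal L=\{\ua_Y f(K\cap\overline A):K\in\mathcal K\}$. Since $K\cap\overline A$ is a closed (hence compact) and nonempty (because $\overline A\in M(\mathcal K)$) subset of $K$, each $\ua_Y f(K\cap\overline A)$ lies in $\mk(Y)$, and filteredness of $\mathcal K$ passes to $\mathcal L$. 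Then $\overline{f(A)}\in M(\mathcal L)$, because any $x\in K\cap\overline A$ gives $f(x)\in f(\overline A)\cap f(K\cap\overline A)\subseteq\overline{f(A)}\cap\ua_Y f(K\cap\overline A)$; and $\overline{f(A)}$ is minimal in $M(\mathcal L)$: if a closed $B\subseteq\overline{f(A)}$ meets every $\ua_Y f(K\cap\overline A)$, then $B$, being a lower set, meets every $f(K\cap\overline A)$, so for each $K$ there is a point of $f^{-1}(B)\cap\overline A\cap K$; thus $f^{-1}(B)\cap\overline A$ is a closed subset of $\overline A$ lying in $M(\mathcal K)$, and minimality of $\overline A$ forces $f^{-1}(B)\cap\overline A=\overline A$, i.e. $A\subseteq f^{-1}(B)$, i.e. $\overline{f(A)}\subseteq B$. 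So $\overline{f(A)}\in m(\mathcal L)$. Now $\overline{f(A)}$ cannot be disjoint from $\bigcap\mathcal L$: otherwise $\bigcap\mathcal L$ would lie in the open set $Y\setminus\overline{f(A)}$, and well-filteredness would give a member of $\mathcal L$ inside $Y\setminus\overline{f(A)}$, contradicting $\overline{f(A)}\in M(\mathcal L)$. Choosing $y_A\in\overline{f(A)}\cap\bigcap\mathcal L$ yields $\overline{\{y_A\}}\subseteq\overline{f(A)}$ and $\overline{\{y_A\}}\in M(\mathcal L)$ (since $y_A$ lies in every member of $\mathcal L$), so minimality forces $\overline{\{y_A\}}=\overline{f(A)}$; uniqueness of $y_A$ comes from $Y$ being $T_0$. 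Hence $A\in\mathsf{WD}(X)$.

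For $\mathsf{WD}(X)\subseteq\ir(X)$, I would apply the defining property of $\mathsf{WD}(X)$ to the canonical mapping $\eta_X\colon X\to X^s=P_H(\ir_c(X))$ into the sobrification, which is sober and hence well-filtered: there is $\xi\in\ir_c(X)$ with $\overline{\eta_X(A)}=\overline{\{\xi\}}$ in $X^s$. Unwinding the lower Vietoris topology gives $\overline{\eta_X(A)}=\{F\in\ir_c(X):F\subseteq\overline A\}$, while $\overline{\{\xi\}}=\{F\in\ir_c(X):F\subseteq\xi\}$ since the specialization order on $X^s$ is set inclusion; comparing these two down-sets of $\ir_c(X)$---the second has greatest element $\xi$, and each $\overline{\{a\}}$ with $a\in\overline A$ lies in the first---forces $\xi=\overline A$. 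Hence $\overline A=\xi\in\ir_c(X)$, so $A$ is irreducible. (Alternatively, the Topological Rudin Lemma gives $\mathsf{RD}(X)\subseteq\ir(X)$ directly, since a filtered family in $\mk(X)$ is irreducible in $P_S(X)$; but this is already subsumed by the chain.)

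The main obstacle is $\mathsf{RD}(X)\subseteq\mathsf{WD}(X)$, and within it the correct choice of the filtered family $\mathcal L\subseteq\mk(Y)$: each $K$ must first be cut down to $\overline A$ before applying $f$, for otherwise the minimality of $\overline A$ in $M(\mathcal K)$ cannot be transferred to the minimality of $\overline{f(A)}$ in $M(\mathcal L)$; once $\mathcal L$ is chosen in this way, the rest is a direct application of well-filteredness, and the first two inclusions are routine.
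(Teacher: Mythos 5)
Your proof is correct. Note that the paper itself gives no argument for this proposition: it is quoted verbatim from \cite[Proposition 6.2]{XSXZ}, so there is no internal proof to compare against. Your four inclusions are each established soundly, and the two nontrivial steps follow the standard route of that reference: for $\mathcal D(X)\subseteq\mathsf{RD}(X)$ the witnessing filtered family $\{\mathord{\uparrow}d : d\in D\}$, and for $\mathsf{RD}(X)\subseteq\mathsf{WD}(X)$ the crucial device of pushing forward $\{\mathord{\uparrow}_Y f(K\cap\overline{A}) : K\in\mathcal K\}$ rather than $\{\mathord{\uparrow}_Y f(K)\}$, which is exactly what makes the minimality of $\overline{A}$ transfer to $\overline{f(A)}$. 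The final inclusion via the sobrification $\eta_X : X\to X^s$ (using that sober spaces are well-filtered) is also the standard argument, and your identification $\overline{\eta_X(A)}=\{F\in\ir_c(X): F\subseteq\overline{A}\}$ is verified correctly.
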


\begin{proposition}\label{2mt7}\emph{(\cite[Lemma 2.5]{SXXZ}, \cite[Lemma 6.23]{XSXZ})} Let $X, Y$ be two $T_{0}$ spaces and $f:X\longrightarrow Y$ a continuous mapping. Then the following statements hold:
\begin{enumerate}[\rm (1)]
\item If $A\in \mathsf{WD}(X)$, then $f(A)\in \mathsf{WD}(Y)$.
\item If $A\in \mathsf{RD}(X)$, then $f(A)\in \mathsf{RD}(Y)$.
\end{enumerate}
\end{proposition}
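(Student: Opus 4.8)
The plan is to prove the two parts separately, with (1) being essentially formal and (2) requiring a small but crucial construction.

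For (1), I would argue directly from the definition of well-filtered determinedness. Let $A\in\mathsf{WD}(X)$ and let $g\colon Y\longrightarrow Z$ be any continuous map into a well-filtered space $Z$. Then $g\circ f\colon X\longrightarrow Z$ is continuous and $Z$ is well-filtered, so by $A\in\mathsf{WD}(X)$ there is a unique $z\in Z$ with $\overline{(g\circ f)(A)}=\overline{\{z\}}$. Since $(g\circ f)(A)=g(f(A))$, this says $\overline{g(f(A))}=\overline{\{z\}}$, and uniqueness of $z$ follows from $Z$ being $T_0$. As $f(A)$ is nonempty, this shows $f(A)\in\mathsf{WD}(Y)$. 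I do not expect any obstacle here.

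For (2), let $A\in\mathsf{RD}(X)$, witnessed by a filtered family $\mathcal K\subseteq\mathsf{K}(X)$ with $\overline A\in m(\mathcal K)$. The obvious candidate family $\{\uparrow f(K):K\in\mathcal K\}$ does witness membership in $M(\cdot)$ but fails the minimality requirement, because the natural preimage argument produces a point of $K$ that need not lie in $\overline A$. The fix is to push forward the ``trimmed'' sets instead: set
\[
\mathcal K'=\{\uparrow f(\overline A\cap K):K\in\mathcal K\}.
\]
Each $\overline A\cap K$ is nonempty (since $\overline A\in M(\mathcal K)$) and, being a closed subset of a compact set, is compact, so $\uparrow f(\overline A\cap K)\in\mathsf{K}(Y)$; and $\mathcal K'$ is filtered because if $K_3\subseteq K_1\cap K_2$ then $\uparrow f(\overline A\cap K_3)\subseteq\uparrow f(\overline A\cap K_1)\cap\uparrow f(\overline A\cap K_2)$. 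Moreover $\overline{f(A)}\in M(\mathcal K')$, since any $x\in\overline A\cap K$ gives $f(x)\in f(\overline A)\cap\uparrow f(\overline A\cap K)\subseteq\overline{f(A)}\cap\uparrow f(\overline A\cap K)$.

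The heart of the matter is the minimality of $\overline{f(A)}$ in $M(\mathcal K')$, and here the decisive point is that closed sets are lower sets for the specialization order, so that $f^{-1}(C)$ ``absorbs'' the saturation $\uparrow f(\overline A\cap K)$. Concretely, let $C$ be closed with $C\subseteq\overline{f(A)}$ and $C\in M(\mathcal K')$, and put $D=\overline A\cap f^{-1}(C)$, a closed subset of $\overline A$. For each $K\in\mathcal K$ pick $y\in C\cap\uparrow f(\overline A\cap K)$; then $f(x)\leq y$ for some $x\in\overline A\cap K$, and since $C$ is a lower set $f(x)\in C$, whence $x\in D\cap K$. Thus $D\in M(\mathcal K)$, and minimality of $\overline A$ forces $D=\overline A$, i.e. $f(\overline A)\subseteq C$, hence $\overline{f(A)}\subseteq C$ and finally $C=\overline{f(A)}$. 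Therefore $\overline{f(A)}\in m(\mathcal K')$ and $f(A)\in\mathsf{RD}(Y)$. The only real obstacle is spotting that one must intersect with $\overline A$ before applying $f$; once that is done, everything reduces to the ``closed set $=$ lower set'' observation.
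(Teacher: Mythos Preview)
The paper does not supply its own proof of this proposition; it merely cites \cite[Lemma 2.5]{SXXZ} and \cite[Lemma 6.23]{XSXZ}. So there is nothing in the present paper to compare against, and the relevant question is simply whether your argument is correct. It is.

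Part (1) is exactly the one-line composition argument one expects from the definition, and it is fine as written.

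Part (2) is also correct, and your instinct about where the difficulty lies is accurate: pushing forward $\mathcal K$ directly via $K\mapsto \uparrow f(K)$ does not work for minimality, while pushing forward $K\mapsto \uparrow f(\overline A\cap K)$ does. Each step checks out: $\overline A\cap K$ is a closed subspace of the compact $K$, hence compact; continuous images and saturations of compacts are compact saturated; filteredness and membership of $\overline{f(A)}$ in $M(\mathcal K')$ are immediate. For minimality, your use of the fact that closed sets are lower sets (so $y\in C$ and $f(x)\le y$ give $f(x)\in C$) is precisely the point, and the pullback $D=\overline A\cap f^{-1}(C)$ lands you back in $M(\mathcal K)$, where minimality of $\overline A$ finishes the job. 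The final containment $\overline{f(A)}\subseteq C$ follows because $f(A)\subseteq f(\overline A)\subseteq C$ and $C$ is closed.

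This is in fact the standard argument used in the cited sources, so your approach coincides with the intended one.
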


Based on the Rudin sets and well-filtered determined sets, the following characterization of well-filtered spaces was given in \cite{XSXZ}.

\begin{proposition}\label{2mt8}\emph{(\cite[Corollary 7.11]{XSXZ})} Let $X$ be a $T_{0}$ space. Then the following conditions are equivalent:
\begin{enumerate}[\rm (1)]
\item $X$ is well-filtered.
\item $\mathsf{WD}_{c}(X)=\mathcal{S}_{c}(X)$, that is, for any $A\in \mathsf{WD}(X)$, there exists a unique $a\in X$ such that $\overline{A}=\overline{\{a\}}$.
\item $\mathsf{RD}_{c}(X)=\mathcal{S}_{c}(X)$, that is, for any $A\in \mathsf{RD}(X)$, there exists a unique $a\in X$ such that $\overline{A}=\overline{\{a\}}$.
\end{enumerate}
\end{proposition}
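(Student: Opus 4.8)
The plan is to prove the cycle $(1)\Rightarrow(2)\Rightarrow(3)\Rightarrow(1)$; the first two implications are formal manipulations of the definitions together with the chain of inclusions $\mathcal{S}(X)\subseteq\mathcal{D}(X)\subseteq\mathsf{RD}(X)\subseteq\mathsf{WD}(X)$ of Proposition \ref{2mt6}, while all the genuine content sits in $(3)\Rightarrow(1)$, where the Topological Rudin Lemma does the work.

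For $(1)\Rightarrow(2)$ I would observe that $\mathcal{S}_c(X)\subseteq\mathsf{WD}_c(X)$ always holds by Proposition \ref{2mt6}, and conversely, given $A\in\mathsf{WD}(X)$, I would feed the continuous map $\mathrm{id}_X\colon X\to X$ — whose codomain is well-filtered by hypothesis — into the defining property of well-filtered determined sets to obtain the (unique) $a\in X$ with $\overline{A}=\overline{\{a\}}$, so that $\mathsf{WD}_c(X)\subseteq\mathcal{S}_c(X)$. For $(2)\Rightarrow(3)$ I would simply chase the inclusions $\mathcal{S}_c(X)\subseteq\mathsf{RD}_c(X)\subseteq\mathsf{WD}_c(X)=\mathcal{S}_c(X)$ furnished by Proposition \ref{2mt6} and the hypothesis.

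The heart of the argument is $(3)\Rightarrow(1)$. Given an open set $U$ and a filtered family $\mathcal{K}\subseteq\mathsf{K}(X)$ with $\bigcap\mathcal{K}\subseteq U$, I would argue by contradiction, assuming $K\not\subseteq U$, i.e.\ $K\cap(X\setminus U)\neq\emptyset$, for every $K\in\mathcal{K}$. The key observation is that, being filtered, $\mathcal{K}$ is a directed subset of the poset $(\mathsf{K}(X),\sqsubseteq)=\Omega P_S(X)$ (Remark \ref{2zj2}), hence an irreducible subset of the Smyth power space $P_S(X)$; so the Topological Rudin Lemma (Lemma \ref{2yl4}) applies to the closed set $C=X\setminus U$, which meets every member of $\mathcal{K}$, and yields a minimal irreducible closed set $A\subseteq C$ still meeting every member of $\mathcal{K}$. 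A short extra step — re-applying Lemma \ref{2yl4} inside any closed proper subset of $A$ that still meets all members of $\mathcal{K}$ — upgrades this to $A\in m(\mathcal{K})$, so that $A$ has the Rudin property and $A\in\mathsf{RD}_c(X)$. Now $(3)$ supplies $a\in X$ with $A=\overline{\{a\}}=\da a$; for each $K\in\mathcal{K}$, choosing $x\in A\cap K$ gives $x\le a$, whence $a\in K$ because $K$ is saturated. Thus $a\in\bigcap\mathcal{K}\subseteq U$, contradicting $a\in A\subseteq X\setminus U$; therefore some $K\in\mathcal{K}$ lies in $U$ and $X$ is well-filtered. The uniqueness clauses in $(2)$, $(3)$ and in well-filteredness are automatic from $T_0$-ness.

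The main obstacle is precisely the passage, inside $(3)\Rightarrow(1)$, from a filtered family of compact saturated sets to an irreducible set in $P_S(X)$ and the subsequent extraction, via the Topological Rudin Lemma, of an honest element of $\mathsf{RD}_c(X)$ — that is, a \emph{minimal} closed set meeting all members of $\mathcal{K}$, not merely a minimal irreducible closed subset of $C$. Once that is secured, the contradiction is a one-line computation with the specialization order, exploiting that compact saturated sets are upper sets.
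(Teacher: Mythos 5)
Your proposal is correct. Note that the paper does not prove this statement at all --- it is quoted verbatim from \cite[Corollary 7.11]{XSXZ} --- so there is no in-paper argument to compare against; your cycle $(1)\Rightarrow(2)\Rightarrow(3)\Rightarrow(1)$ is the standard reconstruction, with the identity map trick for $(1)\Rightarrow(2)$, the inclusion chain of Proposition \ref{2mt6} for $(2)\Rightarrow(3)$, and the Topological Rudin Lemma applied to the irreducible (because $\sqsubseteq$-directed) family $\mathcal{K}\subseteq P_S(X)$ for $(3)\Rightarrow(1)$. One small remark: the ``short extra step'' you insert to get $A\in m(\mathcal{K})$ is harmless but not needed, since a closed set that is minimal among closed subsets of $C=X\setminus U$ meeting all members of $\mathcal{K}$ is automatically minimal in all of $M(\mathcal{K})$ (any smaller member of $M(\mathcal{K})$ would again be a closed subset of $C$), which is exactly what Lemma \ref{2yl4} delivers.
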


In order to provide a uniform approach to $d$-spaces, sober spaces and well-filtered spaces and develop a general framework for dealing with all these spaces, Xu \cite{Xu2} introduced the following concepts.

\begin{definition}\label{2dy3} (\cite{Xu2}) (1) A covariant functor ${\rm H}: \mathbf{Top}_0 \longrightarrow \mathbf{Set}$ is called a \emph{subset system} on $\mathbf{Top}_0$ provided that the following two conditions are satisfied:
\begin{enumerate}[\rm (i)]
\item $\mathcal S(X)\subseteq {\rm H}(X)\subseteq 2^X$ (the set of all subsets of $X$) for each $X\in$ \emph{ob}($\mathbf{Top}_0$).
\item For any continuous mapping $f: X \longrightarrow Y$ in $\mathbf{Top}_0$, ${\rm H}(f)(A)=f(A)\in {\rm H}(Y)$ for all $A\in {\rm H}(X)$.
\end{enumerate}
(2) A subset system ${\rm H}: \mathbf{Top}_0 \longrightarrow \mathbf{Set}$ is called an \emph{irreducible subset system}, or an \emph{R-subset system} for short, if ${\rm H}(X)\subseteq \ir (X)$ for all $X\in$ \emph{ob}($\mathbf{Top}_0$).
\end{definition}

In what follows, the capital letter H always stands for an R-subset system ${\rm H} : \mathbf{Top}_0 \longrightarrow \mathbf{Set}$. For a $T_0$ space $X$, let ${\rm H}_c(X)=\{\overline{A} : A\in {\rm H}(X)\}$. Define a partially order $\leq$ on the set of all R-subset systems by ${\rm H}_1\leq {\rm H}_2$ iff $ {\rm H}_1(X)\subseteq {\rm H}_2(X)$ for all $T_0$ spaces $X$.

Here are some important examples of R-subset systems used later:
\begin{enumerate}[\rm (1)]
    \item $\mathcal S$ (for $X\in$ \emph{ob}($\mathbf{Top}_0$), $\mathcal S(X)$ is the set of all single point subsets of $X$).
    \item $\mathcal D$ (for $X\in$ \emph{ob}($\mathbf{Top}_0$), $\mathcal D(X)$ is the set of all directed subsets of $X$).
    \item $\mathcal R$ (for $X\in$ \emph{ob}($\mathbf{Top}_0$), $\mathcal R(X)$ is the set of all irreducible subsets of $X$).
    \item $\mathsf{WD}$ (for $X\in$ \emph{ob}($\mathbf{Top}_0$), $\mathsf{WD}(X)$ is the set of all well-filtered determined subsets of $X$).
    \item $\mathsf{RD}$ (for $X\in$ \emph{ob}($\mathbf{Top}_0$), $\mathsf{RD}(X)$ is the set of all Rudin subsets of $X$).
\end{enumerate}

By Propositions \ref{2mt6} and \ref{2mt7}, $\mathsf{WD}$ and $\mathsf{RD}$ are R-subset systems; and by Proposition \ref{2mt6}, we have $\mathcal S\leq \mathcal D\leq \mathsf{RD}\leq \mathsf{WD}\leq \mathcal R$.

\begin{definition}\label{2dy9} (\cite{Xu2}) Let ${\rm H} : \mathbf{Top}_0 \longrightarrow \mathbf{Set}$ be an R-subset system and $X$ be a $T_{0}$ space. $X$ is called \emph{H}-\emph{sober} if for any $A\in {\rm H}(X)$, there is a (unique) point $x\in X$ such that $\overline{A}=\overline{\{x\}}$ or, equivalently, if ${\rm H}_c(X)=\mathcal S_c(X)$.
\end{definition}

\begin{definition}\label{2dy11} (\cite{Xu2})  Let ${\rm H} : \mathbf{Top}_0 \longrightarrow \mathbf{Set}$ be an R-subset system and $X$ a $T_0$ space.
A subset $A$ of $X$ is called \emph{H}-\emph{sober determined}, if for any continuous mapping $ f:X\longrightarrow Y$
to an H-sober space $Y$, there exists a unique $y_A\in Y$ such that $\overline{f(A)}=\overline{\{y_A\}}$. Denote by ${\rm H}^d(X)$ the set of all H-sober determined subsets of $X$. The set of all closed H-sober determined subsets of $X$ is denoted by ${\rm H}^d_c(X)$.
\end{definition}

\section{Function spaces related to H-sober spaces}

For $T_0$ spaces $X$ and $Y$, there are four important topologies on the set $\mathbb{C}(X, Y)$ of all continuous functions from $X$ to $Y$, namely, the pointwise convergence topology, compact-open topology, Isbell topology and Scott topology. In this section, we will discuss all of these topologies on $\mathbb{C}(X, Y)$.

We begin by recalling the definitions of pointwise convergence topology, Isbell topology and compact-open topology. For further details, we refer the reader to \cite{RE, GHKLMS, GL}.

In the following, $\mathbb{C}(X, Y)$ always means the set of all continuous functions from $X$ to $Y$.

\begin{definition}\label{3dy1} Let $X$ and $Y$ be topological spaces.
\begin{enumerate}[\rm (1)]
\item For a point $x\in X$ and an open set $U\in \mathcal{O}(Y)$, let $S(x, U)=\{f\in \mathbb{C}(X, Y)$ : $f(x)\in U\}$. The set $\{S(x, U): x\in X, U\in \mathcal{O}(Y)\}$ is a subbasis for the \emph{pointwise convergence topology} (i.e.,
the relative product topology) on $\mathbb{C}(X, Y)$. Let $[X\rightarrow Y]_{P}$ denote the function space $\mathbb{C}(X, Y)$ endowed with the topology of pointwise convergence.
\item The \emph{Isbell topology} on the set $\mathbb{C}(X, Y)$ is generated by the subsets of the form $N(\mathcal H\leftarrow V)=\{f\in \mathbb{C}(X,Y):f^{-1}(V)\in \mathcal H\},$ where $\mathcal H$ is a Scott open subset of the complete lattice $\mathcal{O}(X)$ and $V$ is open in $Y$. Let $[X\rightarrow Y]_{I}$ denote the function space $\mathbb{C}(X, Y)$ endowed with the Isbell topology.
\item The \emph{compact-open topology} on the set $\mathbb{C}(X, Y)$ is generated by the subsets of the form $N(K\rightarrow V)=\{f\in \mathbb{C}(X,Y): f(K)\subseteq V\},$ where $K$ is compact in $X$ and $V$ is open in $Y$. Let $[X\rightarrow Y]_{K}$ denote the function space $\mathbb{C}(X, Y)$ endowed with the compact-open topology.
\end{enumerate}
\end{definition}

For a topological space $X$ and a $T_0$ space $Y$, $Y$ (with the specialization order) is a poset, whence $\mathbb{C}(X, Y)$ is a poset with the pointwise order. Denote by $[X\rightarrow Y]_{\Sigma}$ the Scott space $\Sigma \mathbb{C}(X, Y)$. It is easy to see that the specialization orders on $[X\rightarrow Y]_{P}$, $[X\rightarrow Y]_{K}$, $[X\rightarrow Y]_{I}$ and $[X\rightarrow Y]_{\Sigma}$ are all the usually pointwise order on $\mathbb{C}(X, Y)$, i.e., for any $f, g\in \mathbb{C}(X, Y)$, $$f\leq_{[X\rightarrow Y]_{P}}g ~\mbox{iff}~ f\leq_{[X\rightarrow Y]_{K}}g ~\mbox{iff}~ f\leq_{[X\rightarrow Y]_{I}}g ~\mbox{iff}~ f\leq_{[X\rightarrow Y]_{\Sigma}}g ~\mbox{iff}~ f(x)\leq_Y g(x) ~\mbox{for all}~ x\in X.$$

\begin{lemma}\emph{(\cite[Lemma II-4.2]{GHKLMS})}\label{3yl1} For $T_0$ spaces $X$ and $Y$, the Isbell topology is finer than the compact-open topology which in turn is finer than the topology of pointwise convergence. If $X$ is sober and $\mathcal{O}(X)$ is a continuous lattice, then the Isbell topology and the compact-open topology agree.
\end{lemma}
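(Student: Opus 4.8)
The plan is to verify the three assertions in turn; the two inclusions fall straight out of the definitions, and the equality under the extra hypotheses is where the real content lies. For the inclusion of the compact-open topology in the Isbell topology, fix a compact set $K\subseteq X$ and $V\in\mathcal O(Y)$ and set $\mathcal H_K=\{U\in\mathcal O(X):K\subseteq U\}$. This is an upper set of $\mathcal O(X)$, and it is Scott open: if $\mathcal D\subseteq\mathcal O(X)$ is directed with $K\subseteq\bigcup\mathcal D$, then compactness of $K$ together with directedness of $\mathcal D$ produces a single $D\in\mathcal D$ with $K\subseteq D$, i.e. $D\in\mathcal H_K$. Since $f^{-1}(V)\in\mathcal H_K\iff K\subseteq f^{-1}(V)\iff f(K)\subseteq V$, we get $N(K\to V)=N(\mathcal H_K\leftarrow V)$, so every compact-open subbasic set is Isbell open. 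For the inclusion of the pointwise-convergence topology in the compact-open topology, note that a singleton $\{x\}$ is compact and $S(x,U)=\{f:f(x)\in U\}=N(\{x\}\to U)$, so every pointwise-convergence subbasic set is compact-open.

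Assume now that $X$ is sober and $\mathcal O(X)$ is a continuous lattice. By the above it remains to show that every Isbell subbasic set $N(\mathcal H\leftarrow V)$, with $\mathcal H$ Scott open in $\mathcal O(X)$ and $V\in\mathcal O(Y)$, is open in the compact-open topology. Let $f\in N(\mathcal H\leftarrow V)$, so $f^{-1}(V)\in\mathcal H$. Because $\mathcal O(X)$ is a continuous lattice, $f^{-1}(V)=\bigvee\{W\in\mathcal O(X):W\ll f^{-1}(V)\}$ and the displayed family is directed, so the Scott-openness of $\mathcal H$ gives some $W\in\mathcal H$ with $W\ll f^{-1}(V)$.

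Now comes the crux: turning the lattice-theoretic relation $W\ll f^{-1}(V)$ into a genuine compact set squeezed between $W$ and $f^{-1}(V)$. Since $X$ is sober and core compact, it is locally compact, so the family $\{\mathrm{int}\,Q:Q\ \text{compact saturated},\ Q\subseteq f^{-1}(V)\}$ is a directed open cover of $f^{-1}(V)$; from $W\ll f^{-1}(V)$ we obtain a compact saturated $Q$ with $W\subseteq\mathrm{int}\,Q\subseteq Q\subseteq f^{-1}(V)$. Then $f\in N(Q\to V)$, and for any $g\in N(Q\to V)$ we have $W\subseteq Q\subseteq g^{-1}(V)$, hence $g^{-1}(V)\in\mathcal H$ since $\mathcal H$ is an upper set, i.e. $g\in N(\mathcal H\leftarrow V)$. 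Thus $f\in N(Q\to V)\subseteq N(\mathcal H\leftarrow V)$, so $N(\mathcal H\leftarrow V)$ is a union of compact-open subbasic sets, which completes the argument. The step I expect to be the only non-routine one is precisely this promotion of a way-below pair of open sets to an actual compact saturated set between them: core compactness alone supplies merely the interpolation inside the lattice $\mathcal O(X)$, and it is sobriety that upgrades it — equivalently, that makes $X$ locally compact (one may also see this directly by interpolating a chain $W\ll\cdots\ll W_2\ll W_1\ll f^{-1}(V)$ and invoking the Hofmann--Mislove theorem for the Scott-open filter generated by the $W_n$ to conclude that $\bigcap_n W_n$ is compact saturated).
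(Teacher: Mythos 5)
Your argument is correct. Note that the paper itself gives no proof of this lemma --- it is quoted verbatim from \cite[Lemma~II-4.2]{GHKLMS} --- so there is no in-paper argument to compare against; what you have written is essentially the standard textbook proof: the two inclusions via the Scott-open family $\{U\in\mathcal{O}(X):K\subseteq U\}$ and via compactness of singletons, and the agreement under sobriety plus continuity of $\mathcal{O}(X)$ by upgrading $W\ll f^{-1}(V)$ to an interpolating compact saturated set through local compactness (equivalently, the Hofmann--Mislove theorem), which you correctly identify as the one non-routine step.
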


\begin{lemma}\label{3yl2} Let $X, Y$ be $T_0$ spaces and $x\in X$. Consider the function $$\varphi^{P}_x:[X\rightarrow Y]_{P}\longrightarrow Y,$$ $$\!~~~~~~~~~~~~~f\!~~~\mapsto \!~f(x).$$ Then $\varphi^{P}_x$ is continuous.
\end{lemma}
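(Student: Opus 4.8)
The plan is to verify continuity directly from the definition of the pointwise convergence topology. Recall from Definition \ref{3dy1}(1) that the family $\{S(x,U): x\in X,\ U\in\mathcal O(Y)\}$ is a subbasis for the topology on $[X\to Y]_{P}$, and that a map into $Y$ is continuous as soon as the preimage of every member of a subbasis of $Y$ — in particular every open subset of $Y$ — is open in the domain. So it suffices to fix an arbitrary $U\in\mathcal O(Y)$ and identify $(\varphi^{P}_x)^{-1}(U)$.

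First I would simply unwind the definitions: for the fixed point $x\in X$ and any $U\in\mathcal O(Y)$,
\[
(\varphi^{P}_x)^{-1}(U)=\{f\in\mathbb C(X,Y):\varphi^{P}_x(f)\in U\}=\{f\in\mathbb C(X,Y):f(x)\in U\}=S(x,U).
\]
By Definition \ref{3dy1}(1) the set $S(x,U)$ is a subbasic open subset of $[X\to Y]_{P}$, hence open. Therefore the preimage under $\varphi^{P}_x$ of every open subset of $Y$ is open in $[X\to Y]_{P}$, and $\varphi^{P}_x$ is continuous.

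I do not expect any genuine obstacle: the statement is essentially a restatement of the fact that the pointwise convergence topology is, by construction, the coarsest topology on $\mathbb C(X,Y)$ rendering all the evaluation maps $\varphi^{P}_x$ continuous. The only thing to be careful about is to observe that $S(x,U)$ is \emph{literally} the preimage in question, so no additional manipulation of finite intersections or arbitrary unions of subbasic sets is required; continuity follows from the subbasis criterion in one line.
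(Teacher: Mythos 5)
Your proof is correct and takes essentially the same approach as the paper: both reduce continuity to the observation that for each open $V\subseteq Y$ the set $S(x,V)$ witnesses openness of the preimage, the paper phrasing it via neighborhoods ($f\in S(x,V)$ and $\varphi^{P}_x(S(x,V))\subseteq V$) and you phrasing it as the identity $(\varphi^{P}_x)^{-1}(V)=S(x,V)$. No gap.
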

\begin{proof}
Suppose $f\in [X\rightarrow Y]_{P}$ and $\varphi^{P}_x(f)\in V$ (i.e., $f(x)\in V$), where $V$ is open in $Y$. Then $f\in S(x, V)$ and $S(x, V)$ is open in $[X\rightarrow Y]_{P}$. Clearly, $\varphi^{P}_x(S(x, V))\subseteq V$. So $\varphi^{P}_x:[X\rightarrow Y]_{P}\longrightarrow Y$ is continuous.
\end{proof}

\begin{corollary}\label{3tl3} Let $X, Y$ be $T_0$ spaces and $x\in X$. Then
\begin{enumerate}[\rm (1)]
    \item the function $\varphi^{K}_x:[X\rightarrow Y]_{K}\longrightarrow Y$, $f\mapsto f(x)$, is continuous.
    \item the function $\varphi^{I}_x:[X\rightarrow Y]_{I}\longrightarrow Y$, $f\mapsto f(x)$, is continuous.
\end{enumerate}
\end{corollary}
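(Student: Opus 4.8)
The statement to prove is Corollary \ref{3tl3}: for $T_0$ spaces $X$, $Y$ and $x \in X$, the evaluation maps $\varphi^K_x : [X \to Y]_K \to Y$ and $\varphi^I_x : [X \to Y]_I \to Y$ sending $f \mapsto f(x)$ are continuous.

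The plan is to reduce both parts to Lemma \ref{3yl2}, which already establishes that $\varphi^P_x : [X \to Y]_P \to Y$ is continuous. The key observation is that $\varphi^K_x$, $\varphi^I_x$ and $\varphi^P_x$ are literally the same set-theoretic function $f \mapsto f(x)$ on the same underlying set $\mathbb{C}(X, Y)$; only the topology on the domain changes. By Lemma \ref{3yl1}, both the compact-open topology and the Isbell topology are finer than the pointwise convergence topology on $\mathbb{C}(X, Y)$. Since making the domain topology finer can only make more functions continuous — concretely, if $\mathrm{id} : [X \to Y]_K \to [X \to Y]_P$ is continuous (which it is, being the identity from a finer to a coarser topology) and $\varphi^P_x : [X \to Y]_P \to Y$ is continuous, then $\varphi^K_x = \varphi^P_x \circ \mathrm{id}$ is continuous as a composition of continuous maps. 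The same argument with $I$ in place of $K$ gives part (2).

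Concretely I would write: for (1), given $f \in [X \to Y]_K$ with $\varphi^K_x(f) = f(x) \in V$ for some $V \in \mathcal{O}(Y)$, note that $S(x, V) = \{g \in \mathbb{C}(X,Y) : g(x) \in V\}$ is open in $[X \to Y]_P$, hence open in $[X \to Y]_K$ by Lemma \ref{3yl1}; and $\varphi^K_x(S(x,V)) \subseteq V$. This exhibits an open neighborhood of $f$ mapping into $V$, so $\varphi^K_x$ is continuous. Part (2) is identical, replacing the compact-open topology by the Isbell topology and invoking the corresponding inclusion from Lemma \ref{3yl1}. Alternatively, and even more briefly, one observes that $S(x,V) = N(\{x\} \to V)$ when $\{x\}$ is compact, so $S(x,V)$ is manifestly a subbasic open set of the compact-open topology; but routing through Lemma \ref{3yl1} is cleaner and handles the Isbell case uniformly.

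There is essentially no obstacle here — this is a routine consequence of the comparison of topologies. The only point requiring the tiniest care is making sure the direction of the fineness comparison in Lemma \ref{3yl1} is used correctly: the \emph{finer} topology on the domain is what permits the continuity to persist, and Lemma \ref{3yl1} indeed says Isbell $\supseteq$ compact-open $\supseteq$ pointwise, so all is consistent. I expect the proof to be two or three sentences per part, or a single paragraph treating both cases together.
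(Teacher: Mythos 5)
Your argument is exactly the paper's: the proof given there is simply ``By Lemmas \ref{3yl1} and \ref{3yl2}'', i.e.\ the evaluation map is continuous for the pointwise convergence topology and remains continuous when the domain topology is refined to the compact-open or Isbell topology. Your proposal is correct and spells out this same reduction in slightly more detail.
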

\begin{proof}
By Lemmas \ref{3yl1} and \ref{3yl2}.
\end{proof}

In the following, for topological spaces $X, Y$ and $y\in Y$, $c_y$ denotes the constant function from $X$ to $Y$ with value $y$, i.e., $c_{y}(x)=y$ for all $x\in X$.

\begin{lemma}\emph{(\cite[Lemma 3.2]{LLH})}\label{3yl4}
Let $X$ and $Y$ be $T_0$ spaces. Consider the function $$\!~~~~~\psi^{I}:Y\longrightarrow[X\rightarrow Y]_{I},$$ $$y\!~\mapsto \!~c_{y}.$$ Then $\psi^{I}$ is continuous.
\end{lemma}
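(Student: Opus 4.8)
The plan is to verify continuity of $\psi^{I}$ directly on a subbasis of the Isbell topology. Recall that the sets $N(\mathcal H\leftarrow V)=\{f\in\mathbb C(X,Y):f^{-1}(V)\in\mathcal H\}$, with $\mathcal H$ a Scott open subset of the complete lattice $\mathcal O(X)$ and $V\in\mathcal O(Y)$, form a subbasis for $[X\rightarrow Y]_{I}$; hence it suffices to show that $(\psi^{I})^{-1}(N(\mathcal H\leftarrow V))$ is open in $Y$ for each such $\mathcal H$ and $V$.

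The key computation is to identify the preimage $c_{y}^{-1}(V)$ of the constant map $c_{y}$. Since $c_{y}(x)=y$ for every $x\in X$, we have $c_{y}^{-1}(V)=X$ when $y\in V$ and $c_{y}^{-1}(V)=\emptyset$ when $y\notin V$. Therefore $y\in(\psi^{I})^{-1}(N(\mathcal H\leftarrow V))$ iff $c_{y}^{-1}(V)\in\mathcal H$, and this membership depends only on whether the bottom element $\emptyset$ and the top element $X$ of $\mathcal O(X)$ lie in $\mathcal H$.

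Next I would exploit that $\mathcal H$, being Scott open, is in particular an upper set in $\mathcal O(X)$: (i) $\emptyset\in\mathcal H$ forces $\mathcal H=\mathcal O(X)$, since $\emptyset$ is the least element; (ii) if $\mathcal H\neq\emptyset$ then $X\in\mathcal H$, since $X$ is the greatest element. Combining this with the previous paragraph leaves exactly three cases. If $\mathcal H=\mathcal O(X)$, then $c_{y}^{-1}(V)\in\mathcal H$ always holds and the preimage is $Y$. If $\mathcal H=\emptyset$, the preimage is $\emptyset$. Otherwise $\emptyset\notin\mathcal H\neq\emptyset$, so $X\in\mathcal H$ while $\emptyset\notin\mathcal H$, and the condition $c_{y}^{-1}(V)\in\mathcal H$ reduces exactly to $y\in V$, whence the preimage equals $V$. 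In all three cases the preimage is open in $Y$, so $\psi^{I}$ is continuous.

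I do not anticipate a genuine obstacle here: the argument is a short case analysis once the reduction to ``does $\emptyset$ or $X$ belong to $\mathcal H$?'' is made. The only mild subtlety is remembering to handle the degenerate cases $\mathcal H=\emptyset$ and $\mathcal H=\mathcal O(X)$ (giving the empty and full preimages), which are easy to overlook but cause no difficulty.
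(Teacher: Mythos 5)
Your proof is correct: the reduction of $c_{y}^{-1}(V)$ to the two values $X$ and $\emptyset$, followed by the upper-set case analysis on $\mathcal H$, shows every subbasic preimage is one of $Y$, $V$, or $\emptyset$, hence open. The paper itself gives no proof of this lemma (it is quoted from \cite[Lemma 3.2]{LLH}), and your argument is exactly the standard one behind that citation, so there is nothing to add.
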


By Lemmas \ref{3yl1} and \ref{3yl4}, we can directly obtain the following corollary.

\begin{corollary}\label{3tl5} Let $X$ and $Y$ be $T_0$ spaces. Then
\begin{enumerate}[\rm (1)]
    \item the function $\psi^{K}:Y\longrightarrow[X\rightarrow Y]_{K}$, $y\mapsto c_{y}$, is continuous.
    \item the function $\psi^{P}:Y\longrightarrow[X\rightarrow Y]_{P}$, $y\mapsto c_{y}$, is continuous.
\end{enumerate}
\end{corollary}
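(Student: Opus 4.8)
The plan is to deduce both continuity statements from Lemma~\ref{3yl4} together with the comparison of the three topologies in Lemma~\ref{3yl1}, exploiting the elementary fact that if $g\colon Z\longrightarrow W$ is continuous and $\tau'$ is a topology on the underlying set of $W$ that is coarser than the topology of $W$, then $g$ remains continuous as a map into $(W,\tau')$.

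First I would record that the three function spaces $[X\rightarrow Y]_{I}$, $[X\rightarrow Y]_{K}$ and $[X\rightarrow Y]_{P}$ share the same underlying set $\mathbb{C}(X,Y)$, and that by Lemma~\ref{3yl1} the Isbell topology is finer than the compact-open topology, which in turn is finer than the topology of pointwise convergence. Consequently the identity maps $\mathrm{id}\colon [X\rightarrow Y]_{I}\longrightarrow [X\rightarrow Y]_{K}$ and $\mathrm{id}\colon [X\rightarrow Y]_{K}\longrightarrow [X\rightarrow Y]_{P}$ are continuous.

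Next, since $\psi^{I}\colon Y\longrightarrow [X\rightarrow Y]_{I}$, $y\mapsto c_{y}$, is continuous by Lemma~\ref{3yl4}, and since the assignment $y\mapsto c_{y}$ does not depend on which of the three topologies is placed on $\mathbb{C}(X,Y)$, we obtain $\psi^{K}=\mathrm{id}\circ\psi^{I}$ and $\psi^{P}=\mathrm{id}\circ\mathrm{id}\circ\psi^{I}$ as composites of continuous maps; hence both $\psi^{K}$ and $\psi^{P}$ are continuous. Alternatively, each statement can be verified by hand on subbasic open sets: for $V\in\mathcal{O}(Y)$ and nonempty compact $K\subseteq X$ one has $(\psi^{K})^{-1}(N(K\rightarrow V))=\{y\in Y: c_{y}(K)\subseteq V\}=V$, and for $x\in X$ one has $(\psi^{P})^{-1}(S(x,V))=\{y\in Y: c_{y}(x)\in V\}=V$, both of which are open in $Y$ (the case $K=\emptyset$ being trivial).

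There is no genuine obstacle here: the result is an immediate corollary, and the only point requiring a line of care is the bookkeeping that $y\mapsto c_{y}$ is literally the same set-theoretic function in all three cases, so that coarsening the topology on the target space does not affect it.
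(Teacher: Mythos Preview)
Your argument is correct and is exactly the approach the paper takes: the authors state that the corollary follows ``directly'' from Lemmas~\ref{3yl1} and~\ref{3yl4}, i.e., from the continuity of $\psi^{I}$ together with the fact that the Isbell topology refines the compact-open and pointwise convergence topologies. Your additional direct computation on subbasic open sets is a harmless bonus.
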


\begin{remark}\label{plus1} The similar result of Lemma \ref{3yl4} and Corollary \ref{3tl5} for the Scott topology does not hold in general. Indeed, let $X=1=\{0\}$ be the topological space with only one point (clearly, $\mathcal O (X)=\mathcal O(1)=\{\emptyset, 1\}$ and $Y$ the space $(P, \upsilon (P))$, where $P$ is a poset for which the Scott topology is strictly finer than the upper topology on $P$ (for example, $P$ is a countably infinite set with the discrete order). Then $\mathbb{C}(X, Y)=\{c_y : y\in Y\}$ and $c_y\mapsto y : [X\rightarrow Y]_{\Sigma}\rightarrow \Sigma Y=\Sigma P$ is a homeomorphism. Since $\upsilon (P)\subsetneqq \sigma (P)$, the function $\psi^{\Sigma}: Y\longrightarrow[X\rightarrow Y]_{\Sigma}$, $y\mapsto c_{y}$, is not continuous.

\end{remark}

\begin{lemma}\emph{(\cite[Proposition 4.28]{Xu2})}\label{3mt6}
A retract of an H-sober space is H-sober.
\end{lemma}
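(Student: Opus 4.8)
The plan is to verify the defining property of H-soberness for $Y$ directly, exploiting functoriality of the R-subset system ${\rm H}$ together with continuity of the two maps witnessing the retraction. Write the retraction data as continuous maps $f:X\longrightarrow Y$ and $g:Y\longrightarrow X$ with $f\circ g=\mathrm{id}_Y$, where $X$ is H-sober. First I would observe that $Y$ is automatically $T_0$: since $f\circ g=\mathrm{id}_Y$ the map $g$ is injective, and for every $U\in\mathcal O(Y)$ one has $g(U)=g(Y)\cap f^{-1}(U)$, so $g$ is a topological embedding of $Y$ onto a subspace of the $T_0$ space $X$. Hence the ``uniqueness'' requirement in Definition \ref{2dy9} is free, and it remains to produce, for each $A\in{\rm H}(Y)$, a point $y\in Y$ with $\overline A=\overline{\{y\}}$.

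So fix $A\in{\rm H}(Y)$. Since ${\rm H}$ is an R-subset system and $g$ is continuous, ${\rm H}(g)(A)=g(A)\in{\rm H}(X)$ by Definition \ref{2dy3}. As $X$ is H-sober, there is a (unique) $x\in X$ with $\overline{g(A)}=\overline{\{x\}}$. I would then push this forward along $f$, using the elementary fact that for a continuous map $h$ and a subset $S$ of its domain one has $\overline{h(\overline S)}=\overline{h(S)}$ (which follows from $h(\overline S)\subseteq\overline{h(S)}\subseteq\overline{h(\overline S)}$). Applying this with $h=f$ and $S=g(A)$ gives
$$\overline{\{f(x)\}}=\overline{f\bigl(\overline{\{x\}}\bigr)}=\overline{f\bigl(\overline{g(A)}\bigr)}=\overline{f(g(A))}=\overline{(f\circ g)(A)}=\overline{A}.$$
Thus $y:=f(x)$ does the job, and by $T_0$-ness of $Y$ it is the unique such point. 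Since $A\in{\rm H}(Y)$ was arbitrary, ${\rm H}_c(Y)=\mathcal S_c(Y)$, i.e. $Y$ is H-sober.

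I do not anticipate a genuine obstacle here: the argument is a short diagram chase, and the only points needing a moment's care are the verification that $g$ is a topological embedding (so that $Y$ inherits the $T_0$ separation that makes the statement meaningful) and the closure identity $\overline{f(\overline S)}=\overline{f(S)}$ for continuous $f$; both are entirely routine. It may be worth remarking afterwards that this lemma specialises, via $\mathcal D$, $\mathcal R$ and $\mathsf{WD}$ (or $\mathsf{RD}$), to the known facts that a retract of a $d$-space (resp., sober space, well-filtered space) has the same property.
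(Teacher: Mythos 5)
Your proof is correct and follows the standard argument (the paper itself only quotes this lemma from \cite[Proposition 4.28]{Xu2} without reproducing a proof): push $A\in{\rm H}(Y)$ forward along the section $g$ using functoriality of ${\rm H}$, apply H-sobriety of $X$, and transport the generic point back along the retraction $f$ via the identity $\overline{f(\overline S)}=\overline{f(S)}$. The preliminary verification that $Y$ is $T_0$, so that ${\rm H}(Y)$ and the uniqueness clause make sense, is a worthwhile inclusion and all steps check out.
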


\begin{corollary}\label{3mt6-}
A retract of a sober space (resp., $d$-space, well-filtered space) is a sober space (resp., $d$-space, well-filtered space).
\end{corollary}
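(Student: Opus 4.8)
The plan is to obtain this as an immediate consequence of Lemma \ref{3mt6}, once one recalls that $d$-spaces, sober spaces and well-filtered spaces are exactly the H-sober spaces for three particular choices of the R-subset system ${\rm H}$. First I would record the three identifications. By Proposition \ref{2mt1} together with Definition \ref{2dy9}, a $T_0$ space $X$ is a $d$-space if and only if $\mathcal D_c(X)=\mathcal S_c(X)$, i.e.\ if and only if $X$ is $\mathcal D$-sober. Since $\mathcal R(X)=\ir(X)$ and the closure of an irreducible set is irreducible closed, the defining condition of a sober space is precisely $\mathcal R_c(X)=\mathcal S_c(X)$, so a sober space is the same as an $\mathcal R$-sober space. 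Finally, Proposition \ref{2mt8} states that $X$ is well-filtered if and only if $\mathsf{WD}_c(X)=\mathcal S_c(X)$ (equivalently $\mathsf{RD}_c(X)=\mathcal S_c(X)$), so the well-filtered spaces are exactly the $\mathsf{WD}$-sober (equivalently $\mathsf{RD}$-sober) spaces.

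With these identifications in hand, the proof becomes a one-line application of Lemma \ref{3mt6}: taking ${\rm H}=\mathcal D$, then ${\rm H}=\mathcal R$, then ${\rm H}=\mathsf{WD}$ (or ${\rm H}=\mathsf{RD}$) in turn shows that a retract of a $d$-space is a $d$-space, a retract of a sober space is sober, and a retract of a well-filtered space is well-filtered.

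I do not expect any real obstacle here. The only point that needs checking is that $\mathcal D$, $\mathcal R$, $\mathsf{WD}$ and $\mathsf{RD}$ genuinely are R-subset systems in the sense of Definition \ref{2dy3}, but this is already available in the excerpt: $\mathsf{WD}$ and $\mathsf{RD}$ are R-subset systems by Propositions \ref{2mt6} and \ref{2mt7}, while $\mathcal D$ and $\mathcal R$ visibly satisfy Definition \ref{2dy3} since $\mathcal S(X)\subseteq\mathcal D(X)\subseteq\mathcal R(X)=\ir(X)$ and continuous images of directed (resp.\ irreducible) sets are directed (resp.\ irreducible). Hence no argument beyond invoking Lemma \ref{3mt6} is required.
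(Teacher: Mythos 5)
Your proposal is correct and is exactly the argument the paper intends: Corollary \ref{3mt6-} is stated immediately after Lemma \ref{3mt6} as the specialization of that lemma to the R-subset systems $\mathcal R$, $\mathcal D$ and $\mathsf{WD}$ (or $\mathsf{RD}$), using the identifications of sober, $d$- and well-filtered spaces with the corresponding H-sober spaces via Proposition \ref{2mt1}, the definition of sobriety, and Proposition \ref{2mt8}. Nothing further is needed.
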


\begin{proposition} \label{3mt8}
Let $X, Y$ be $T_{0}$ spaces.
\begin{enumerate}[\rm (1)]
    \item If the function space $\mathbb{C}(X, Y)$ equipped with the compact-open topology is H-sober, then $Y$ is H-sober.
    \item If the function space $\mathbb{C}(X, Y)$ equipped with the pointwise convergence topology is H-sober, then $Y$ is H-sober.
\end{enumerate}
\end{proposition}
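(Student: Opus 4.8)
The plan is to exhibit $Y$ as a retract of the function space in question and then invoke Lemma~\ref{3mt6}. Fix a point $a\in X$ (the statement is only meaningful when $X\neq\emptyset$; see the closing remark below). For part~(1), I would use the two continuous maps already at hand: the insertion of constants $\psi^{K}\colon Y\longrightarrow[X\rightarrow Y]_{K}$, $y\mapsto c_{y}$, which is continuous by Corollary~\ref{3tl5}(1), and the evaluation at $a$, namely $\varphi^{K}_{a}\colon[X\rightarrow Y]_{K}\longrightarrow Y$, $f\mapsto f(a)$, which is continuous by Corollary~\ref{3tl3}(1).

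The verification is then immediate: for every $y\in Y$ one has $(\varphi^{K}_{a}\circ\psi^{K})(y)=\varphi^{K}_{a}(c_{y})=c_{y}(a)=y$, so $\varphi^{K}_{a}\circ\psi^{K}=\mathrm{id}_{Y}$. Hence $Y$ is a retract of $[X\rightarrow Y]_{K}$, with section $\psi^{K}$ and retraction $\varphi^{K}_{a}$, and since $[X\rightarrow Y]_{K}$ is assumed to be H-sober, Lemma~\ref{3mt6} yields that $Y$ is H-sober. Part~(2) follows by exactly the same scheme, replacing $\psi^{K}$ by $\psi^{P}\colon Y\longrightarrow[X\rightarrow Y]_{P}$ (continuous by Corollary~\ref{3tl5}(2)) and $\varphi^{K}_{a}$ by $\varphi^{P}_{a}\colon[X\rightarrow Y]_{P}\longrightarrow Y$ (continuous by Lemma~\ref{3yl2}); one again gets $\varphi^{P}_{a}\circ\psi^{P}=\mathrm{id}_{Y}$, so $Y$ is a retract of $[X\rightarrow Y]_{P}$ and Lemma~\ref{3mt6} applies.

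There is no real obstacle in this argument: all the substance has been front-loaded into the continuity of the evaluation maps $\varphi_{a}$ and of the constant-map embeddings $\psi$, and what is left is only the trivial identity $\varphi_{a}\circ\psi=\mathrm{id}_{Y}$. The one point deserving a word of care is the degenerate case $X=\emptyset$: then $\mathbb{C}(X,Y)$ is a singleton, hence trivially H-sober in every topology on it, while $Y$ need not be H-sober, so the proposition must be read with $X$ nonempty --- which is exactly what licenses the choice of $a\in X$ above. Note also that, since being H-sober is not a monotone property with respect to the topology on $\mathbb{C}(X,Y)$, part~(2) cannot simply be deduced from part~(1) via Lemma~\ref{3yl1}; the two cases genuinely require the separate (though identical-looking) retract diagrams.
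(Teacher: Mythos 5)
Your proof is correct and is essentially identical to the paper's own argument: fix a point of $X$, show $\varphi_{a}\circ\psi=\mathrm{id}_{Y}$ with both maps continuous by Corollary~\ref{3tl3} and Corollary~\ref{3tl5} (resp.\ Lemma~\ref{3yl2}), and conclude via the retract Lemma~\ref{3mt6}. Your side remark that the statement implicitly requires $X\neq\emptyset$ is a legitimate observation the paper leaves tacit, but it does not change the substance of the argument.
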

\begin{proof} (1): Suppose that $[X\rightarrow Y]_{K}$ is H-sober. Select an $x\in X$. Then for any $y\in Y$, $(\varphi^{K}_x \circ \psi^{K})(y)=\varphi^{K}_x (c_y)=y$ and $\varphi^{K}_x$, $\psi^{K}$ are continuous by Corollary \ref{3tl3} (1) and Corollary \ref{3tl5} (1). Thus $Y$ is a retract of $[X\rightarrow Y]_{K}$. Hence $Y$ is H-sober by Lemma \ref{3mt6}.

(2): The proof is similar to that of (1).

\end{proof}

\begin{theorem}\emph{(\cite[Theorem 4.30]{Xu2})}\label{3dl8} Let $H:\mathbf{Top}_{0}\longrightarrow \mathbf{Set}$ be an R-subset system, $X$ a $T_{0}$ space and $Y$ an H-sober space. Then the function space $\mathbb{C}(X, Y)$ equipped with the topology of pointwise convergence is H-sober.
\end{theorem}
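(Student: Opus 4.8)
The plan is to show that the canonical evaluation map $\varphi^P_x : [X\to Y]_P \to Y$ and the constant-function map $\psi^P : Y \to [X\to Y]_P$ exhibit $Y$ as a retract of $[X\to Y]_P$ is \emph{not} what is needed here — that direction was already handled in Proposition \ref{3mt8}(2). Instead I must go the other way: starting from an H-sober space $Y$, I need to verify the defining condition ${\rm H}_c([X\to Y]_P) = \mathcal{S}_c([X\to Y]_P)$. So the core of the argument is: given $\mathcal{F} \in {\rm H}([X\to Y]_P)$, produce a (necessarily unique) $g \in \mathbb{C}(X,Y)$ with $\overline{\mathcal{F}} = \overline{\{g\}}$ in $[X\to Y]_P$.

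First I would fix $\mathcal{F} \in {\rm H}([X\to Y]_P)$ and, for each $x \in X$, push it forward along the continuous evaluation $\varphi^P_x$ (Lemma \ref{3yl2}): since H is an R-subset system, $\varphi^P_x(\mathcal{F}) \in {\rm H}(Y)$, and because $Y$ is H-sober there is a unique $g(x) \in Y$ with $\overline{\varphi^P_x(\mathcal{F})} = \overline{\varphi^P_x(\mathcal{F})} = \overline{\{g(x)\}}$ in $Y$. This defines a function $g : X \to Y$. The next step is to check that $g$ is continuous: for $V \in \mathcal{O}(Y)$ and $x \in g^{-1}(V)$, one has $g(x) \in V$, so $V$ meets $\overline{\varphi^P_x(\mathcal{F})}$, hence meets $\varphi^P_x(\mathcal{F})$, i.e. some $f \in \mathcal{F}$ has $f(x) \in V$; I would then use continuity of that $f$ together with the way $g(x)$ is determined by $\mathcal{F}$ — more precisely, I would show $g^{-1}(V) = \bigcup_{f \in \mathcal{F}} f^{-1}(V)$ is open by checking both inclusions, the non-trivial one being $g^{-1}(V) \subseteq \bigcup_{f\in\mathcal F} f^{-1}(V)$ which follows from the meeting-relation just noted.

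Then I would prove $\overline{\mathcal{F}} = \overline{\{g\}}$ in $[X\to Y]_P$. For ``$\subseteq$'': since the subbasic closed sets of $[X\to Y]_P$ are complements of the $S(x,V)$, it suffices to see that whenever $\mathcal{F} \subseteq [X\to Y]_P \setminus S(x,V)$, also $g \in [X\to Y]_P\setminus S(x,V)$; equivalently, if no $f\in\mathcal F$ has $f(x)\in V$ then $g(x)\notin V$ — which holds because $V$ then misses $\varphi^P_x(\mathcal F)$, hence misses $\overline{\{g(x)\}}$, so $g(x)\notin V$. Actually the cleanest route is to show $g \in \overline{\mathcal F}$ directly and that $g$ is an upper bound of $\mathcal F$ in the pointwise order, then invoke that $\overline{\{g\}} = \da g$ (the closure of a point is its down-set in the specialization order) together with $\overline{\mathcal F}$ being irreducible to pin down $\overline{\mathcal F} = \overline{\{g\}}$. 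For $g \in \overline{\mathcal F}$: any basic open neighbourhood of $g$ is a finite intersection $\bigcap_{i=1}^n S(x_i, V_i)$ with $g(x_i)\in V_i$; I would need some $f\in\mathcal F$ lying in all of them simultaneously, and here I would use irreducibility of $\mathcal F$ in $[X\to Y]_P$ to reduce to the single-neighbourhood case, which is exactly the meeting-relation above. For ``$\supseteq$'', i.e. $\overline{\{g\}}\subseteq\overline{\mathcal F}$: this is immediate once $g\in\overline{\mathcal F}$, since $\overline{\{g\}}$ is the smallest closed set containing $g$.

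The main obstacle I expect is the continuity of $g$ and the simultaneous-neighbourhood step showing $g\in\overline{\mathcal F}$: both require leveraging that $\mathcal{F}$ is irreducible (not merely that each one-point projection behaves well), and the natural tool is to observe that a finite intersection $\bigcap_{i=1}^n S(x_i,V_i)$ meeting $\overline{\mathcal F}$ forces, by irreducibility applied inductively, each $S(x_i,V_i)$ to meet $\mathcal F$ and then to combine these — so I would isolate as a small lemma the fact that for $\mathcal F$ irreducible, if each of finitely many open sets $U_1,\dots,U_n$ meets $\mathcal F$ and their intersection is open, one still needs care, since ``each meets'' does not give ``intersection meets'' in general. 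This is precisely where the structure of ${\rm H}$ and the definition of H-soberness via the $\varphi^P_x$ must interlock; I anticipate the actual argument instead defines $g$ first, proves continuity via $g^{-1}(V)=\bigcup_{f\in\mathcal F}f^{-1}(V)$, and then shows $\mathcal F\subseteq\da g$ and $g\in\overline{\mathcal F}$ using only the one-variable meeting-relation plus the general fact $\overline{\{g\}}=\da g$, sidestepping the problematic finite-intersection issue altogether.
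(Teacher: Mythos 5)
Your proposal is correct and follows essentially the same route as the paper's argument for the parallel Isbell-topology result (Theorem \ref{3dl11}), of which the pointwise case is the simpler instance: define $g$ pointwise via the evaluations $\varphi^P_x$ and H-sobriety of $Y$, check continuity through $g^{-1}(V)=\bigcup_{f\in\mathcal F}f^{-1}(V)$, get $\mathcal F\subseteq\mathord{\downarrow}g=\overline{\{g\}}$ from the pointwise specialization order, and get $g\in\overline{\mathcal F}$ from the subbasic meeting relation together with irreducibility of $\mathcal F$. Your closing hesitation about the finite-intersection step is unnecessary: for an irreducible set, ``each of finitely many open sets meets $\mathcal F$'' does imply ``their intersection meets $\mathcal F$'' (this is an equivalent formulation of irreducibility), which is exactly how the paper passes from subbasic to basic neighbourhoods of $g$, so nothing needs to be sidestepped.
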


By Proposition \ref{3mt8} and Theorem \ref{3dl8}, we have the following result.

\begin{corollary}\label{3dl9} Let $H:\mathbf{Top}_{0}\longrightarrow \mathbf{Set}$ be an R-subset system and $X, Y$ two $T_{0}$ spaces. Then the following two conditions are equivalent:
\begin{enumerate}[\rm (1)]
\item $Y$ is H-sober.
\item The function space $\mathbb{C}(X, Y)$ equipped with the topology of pointwise convergence is H-sober.
\end{enumerate}
\end{corollary}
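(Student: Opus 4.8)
The plan is to read this off directly from the two results that immediately precede it, since together they already supply both implications, so no new machinery is needed. For the implication $(1)\Rightarrow(2)$ I would simply cite Theorem~\ref{3dl8}: if $Y$ is H-sober, then for the given $T_{0}$ space $X$ the function space $[X\rightarrow Y]_{P}$ is H-sober. This is the substantive direction, but all of its content is already packaged inside Theorem~\ref{3dl8}, so nothing further is required here.

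For the implication $(2)\Rightarrow(1)$ I would invoke Proposition~\ref{3mt8}(2). If one prefers to spell it out, the argument is the standard retraction device: pick a point $x\in X$ and consider the evaluation map $\varphi^{P}_{x}:[X\rightarrow Y]_{P}\rightarrow Y$, $f\mapsto f(x)$, together with the constant-function map $\psi^{P}:Y\rightarrow[X\rightarrow Y]_{P}$, $y\mapsto c_{y}$. Both are continuous, by Lemma~\ref{3yl2} and Corollary~\ref{3tl5}(2) respectively, and $(\varphi^{P}_{x}\circ\psi^{P})(y)=\varphi^{P}_{x}(c_{y})=c_{y}(x)=y$, so $\varphi^{P}_{x}\circ\psi^{P}=id_{Y}$. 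Hence $Y$ is a retract of the H-sober space $[X\rightarrow Y]_{P}$, and Lemma~\ref{3mt6} (a retract of an H-sober space is H-sober) yields that $Y$ is H-sober.

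The statement is essentially a bookkeeping corollary, so I do not expect any genuine obstacle; all the real work sits in Theorem~\ref{3dl8} and in the continuity lemmas. The only point deserving a word of care is the degenerate case $X=\emptyset$, where $[X\rightarrow Y]_{P}$ is a one-point (hence H-sober) space while $Y$ need not be; as already in Proposition~\ref{3mt8}, one tacitly takes $X$ to be nonempty when choosing the point $x$, and under that mild convention the equivalence is immediate.
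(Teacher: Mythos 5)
Your proof is correct and follows exactly the paper's route: the paper derives this corollary by combining Theorem~\ref{3dl8} for the forward direction with Proposition~\ref{3mt8}(2) (itself proved by the same retraction argument via $\varphi^{P}_x$ and $\psi^{P}$) for the converse. Your remark about the degenerate case $X=\emptyset$ is a fair point of care that the paper leaves tacit.
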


Applying Proposition \ref{3mt8} and Corollary \ref{3dl9} directly to the R-subset systems $\mathcal{D}$, $\mathcal{R}$ and $\mathsf{WD}$ (or $\mathsf{RD}$), we get the following two corollaries.

\begin{corollary} \label{3mt8+}
Let $X, Y$ be $T_{0}$ spaces. If the function space $\mathbb{C}(X, Y)$ equipped with the compact-open topology is a sober space (resp., $d$-spaces, well-filtered space), then $Y$ is a sober space (resp., $d$-spaces, well-filtered space).
\end{corollary}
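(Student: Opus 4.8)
The plan is to read this off from Proposition~\ref{3mt8}(1) by instantiating the abstract R-subset system H with three concrete choices. First I would invoke the identifications assembled in the preliminaries: a $T_0$ space is a $d$-space iff it is $\mathcal D$-sober (this is Proposition~\ref{2mt1} read through Definition~\ref{2dy9}); it is sober iff it is $\mathcal R$-sober (immediate from the definition of sobriety together with $\mathcal R(X)=\ir(X)$, since passing to closures does not affect irreducibility); and it is well-filtered iff it is $\mathsf{WD}$-sober iff it is $\mathsf{RD}$-sober (this is Proposition~\ref{2mt8}). I would also recall that $\mathcal D$, $\mathcal R$, $\mathsf{WD}$ and $\mathsf{RD}$ are genuine R-subset systems, as recorded after Definition~\ref{2dy9} via Propositions~\ref{2mt6} and~\ref{2mt7}, so that Proposition~\ref{3mt8} is applicable to each of them.

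With these identifications in hand the argument is a one-line specialization. Suppose $[X\rightarrow Y]_{K}$ is sober; viewing it as an $\mathcal R$-sober space and applying Proposition~\ref{3mt8}(1) with $\mathrm{H}=\mathcal R$ yields that $Y$ is $\mathcal R$-sober, i.e.\ sober. The case of $d$-spaces is handled identically with $\mathrm{H}=\mathcal D$ (using the $\mathcal D$-sober reformulation from Proposition~\ref{2mt1}), and the case of well-filtered spaces with $\mathrm{H}=\mathsf{WD}$ (or $\mathrm{H}=\mathsf{RD}$), using Proposition~\ref{2mt8}.

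There is essentially no obstacle here: all the content is carried by Proposition~\ref{3mt8}(1), whose proof in turn rests on Lemma~\ref{3mt6} (a retract of an H-sober space is H-sober) and on the observation that, for a fixed $x\in X$, the maps $\varphi^{K}_x$ and $\psi^{K}$ exhibit $Y$ as a retract of $[X\rightarrow Y]_{K}$. The only point worth an explicit sentence is that the three target classes genuinely are the classes of H-sober spaces for the R-subset systems named above — precisely what the preliminaries were arranged to supply — after which nothing further is needed. (The same remark shows that Corollary~\ref{3mt8+} could equally be stated as a corollary of Corollary~\ref{3dl9} for the pointwise convergence topology.)
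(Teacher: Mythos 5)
Your proposal is correct and matches the paper's own derivation: the corollary is obtained by specializing Proposition~\ref{3mt8}(1) to the R-subset systems $\mathcal D$, $\mathcal R$ and $\mathsf{WD}$ (or $\mathsf{RD}$), using the identifications of $d$-spaces, sober spaces and well-filtered spaces as the corresponding classes of H-sober spaces. Nothing further is needed.
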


\begin{corollary}\label{3tl9} For $T_0$ spaces  $X$ and $Y$, the following two conditions are equivalent:
\begin{enumerate}[\rm (1)]
\item $Y$ is a sober space (resp., $d$-spaces, well-filtered space).
\item The function space $\mathbb{C}(X, Y)$ equipped with the pointwise convergence topology is a sober space (resp., $d$-space, well-filtered space).
\end{enumerate}
\end{corollary}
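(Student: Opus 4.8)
The plan is to obtain this corollary as a direct specialization of Corollary \ref{3dl9}, choosing the R-subset system H to be successively $\mathcal D$, $\mathcal R$ and $\mathsf{WD}$ (equivalently $\mathsf{RD}$), once the three classes of spaces appearing in the statement have been identified with the corresponding classes of H-sober spaces. So the work splits into two purely bookkeeping tasks: recording the dictionary ``H-sober $=$ sober / $d$-space / well-filtered'' and then quoting Corollary \ref{3dl9}.

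First I would record the dictionary. For $H=\mathcal R$: by the definition of soberness a $T_0$ space $Z$ is sober exactly when every irreducible closed subset of $Z$ is the closure of a unique point, i.e. $\mathcal R_c(Z)=\ir_c(Z)=\mathcal S_c(Z)$, which by Definition \ref{2dy9} says precisely that $Z$ is $\mathcal R$-sober. For $H=\mathcal D$: by Proposition \ref{2mt1}, $Z$ is a $d$-space iff $\mathcal D_c(Z)=\mathcal S_c(Z)$, i.e. iff $Z$ is $\mathcal D$-sober. For $H=\mathsf{WD}$ (resp. $H=\mathsf{RD}$): by Proposition \ref{2mt8}, $Z$ is well-filtered iff $\mathsf{WD}_c(Z)=\mathcal S_c(Z)$ iff $\mathsf{RD}_c(Z)=\mathcal S_c(Z)$, i.e. iff $Z$ is $\mathsf{WD}$-sober (equivalently $\mathsf{RD}$-sober). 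I would also note in passing that $\mathcal D$ and $\mathcal R$ are genuine R-subset systems --- $\mathcal S(X)\subseteq\mathcal D(X)\subseteq\ir(X)$ and $\mathcal S(X)\subseteq\mathcal R(X)=\ir(X)$, and continuous images of directed (resp. irreducible) sets are directed (resp. irreducible) --- while for $\mathsf{WD}$ and $\mathsf{RD}$ this was already established via Propositions \ref{2mt6} and \ref{2mt7}; hence the hypotheses of Corollary \ref{3dl9} are met in each of the four instances.

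Then I would simply apply Corollary \ref{3dl9}. With $H=\mathcal R$ it yields: $Y$ is sober iff $\mathbb C(X,Y)$ with the pointwise convergence topology is sober. With $H=\mathcal D$ it yields: $Y$ is a $d$-space iff $\mathbb C(X,Y)$ with the pointwise convergence topology is a $d$-space. With $H=\mathsf{WD}$ (or $\mathsf{RD}$) it yields: $Y$ is well-filtered iff $\mathbb C(X,Y)$ with the pointwise convergence topology is well-filtered. Together these are exactly the three ``resp.'' versions of the equivalence (1)$\Leftrightarrow$(2), so the corollary follows.

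There is essentially no genuine obstacle: the only point requiring care is that each of the three space classes is \emph{faithfully} captured by the corresponding R-subset system, which is the content of Propositions \ref{2mt1}, \ref{2mt8} and the definition of soberness cited above, and that the general hypotheses ($H$ an R-subset system, $X$ and $Y$ in $\mathbf{Top}_0$) hold in every instance. All the substance is already packaged in Corollary \ref{3dl9}, which itself rests on Theorem \ref{3dl8} for the implication (1)$\Rightarrow$(2) and on Proposition \ref{3mt8}(2) --- exhibiting $Y$ as a retract of $\mathbb C(X,Y)$ with the pointwise convergence topology via $\psi^P$ and $\varphi^P_x$ --- for the implication (2)$\Rightarrow$(1).
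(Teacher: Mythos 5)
Your proposal is correct and follows essentially the same route as the paper, which obtains this corollary by applying Corollary \ref{3dl9} directly to the R-subset systems $\mathcal D$, $\mathcal R$ and $\mathsf{WD}$ (or $\mathsf{RD}$). The only difference is that you spell out explicitly the dictionary identifying sober, $d$-, and well-filtered spaces with $\mathcal R$-, $\mathcal D$-, and $\mathsf{WD}$-sober spaces via Proposition \ref{2mt1}, Proposition \ref{2mt8} and the definition of sobriety, which the paper leaves implicit.
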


The results for $d$-spaces and sober spaces in Corollary \ref{3tl9} were first shown in \cite[Theorem 3 and Theorem 6]{ES} by a different method.

\begin{proposition}\label{3dl16+} (\cite[LemmaII-4.3]{GHKLMS})
For a $T_0$ $X$ and a $d$-space $Y$, the function space $\mathbb{C}(X, Y)$ equipped with the Isbell topology is a $d$-space and hence the Scott topology on $\mathbb{C}(X, Y)$ is finer than the Isbell topology.
\end{proposition}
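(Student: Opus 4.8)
The plan is to verify directly the two defining properties of a $d$-space for $\mathbb{C}(X,Y)$ equipped with the Isbell topology: that it is a dcpo in its specialization order (which, as recorded just before Lemma \ref{3yl1}, is the pointwise order), and that every Isbell-open subset is Scott-open with respect to that order. The second property is exactly the final clause of the statement, so both assertions will come out of the same argument.

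First I would show that $\mathbb{C}(X,Y)$ is a dcpo under the pointwise order. Given a directed family $\{f_i\}_{i\in I}\subseteq\mathbb{C}(X,Y)$, for each $x\in X$ the set $\{f_i(x):i\in I\}$ is directed in $Y$, and since $Y$ is a $d$-space it is a dcpo, so this set has a supremum; define $f(x)=\sup_i f_i(x)$. The crucial point is that $f$ is continuous: for $V\in\mathcal O(Y)$, the fact that $V$ is an upper set together with the fact that $V$ is Scott-open (here we use $\mathcal O(Y)\subseteq\sigma(Y)$, i.e. that $Y$ is a $d$-space, not merely a dcpo) gives $f^{-1}(V)=\bigcup_i f_i^{-1}(V)$, which is open in $X$; hence $f\in\mathbb{C}(X,Y)$, and clearly $f=\sup_i f_i$ in the pointwise order.

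Next I would show that each subbasic Isbell-open set $N(\mathcal H\leftarrow V)=\{g\in\mathbb{C}(X,Y):g^{-1}(V)\in\mathcal H\}$, with $\mathcal H$ Scott-open in $\mathcal O(X)$ and $V\in\mathcal O(Y)$, is Scott-open in $\Sigma\mathbb{C}(X,Y)$. Since the Scott topology is closed under finite intersections and arbitrary unions, this yields at once that the whole Isbell topology is contained in $\sigma(\mathbb{C}(X,Y))$, i.e. the Scott topology is finer than the Isbell topology. That $N(\mathcal H\leftarrow V)$ is an upper set follows because $f\le g$ implies $f^{-1}(V)\subseteq g^{-1}(V)$ and $\mathcal H$ is an upper set. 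For the directed-sup condition, if $\{f_i\}$ is directed with $f=\sup_i f_i\in N(\mathcal H\leftarrow V)$, then (using the description of $f$ from the previous step together with directedness of $\{f_i\}$) the family $\{f_i^{-1}(V)\}$ is directed in the lattice $\mathcal O(X)$ with join $\bigcup_i f_i^{-1}(V)=f^{-1}(V)\in\mathcal H$; Scott-openness of $\mathcal H$ then forces $f_i^{-1}(V)\in\mathcal H$ for some $i$, that is, $f_i\in N(\mathcal H\leftarrow V)$.

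Combining the two steps, $[X\rightarrow Y]_I$ is a dcpo all of whose Isbell-open sets are Scott-open, hence a $d$-space, and the Scott topology on $\mathbb{C}(X,Y)$ is finer than the Isbell topology. I expect the main obstacle to be the continuity of the pointwise supremum $f$: this is the one place where the hypothesis that $Y$ is a $d$-space is genuinely needed, since it is precisely Scott-openness of the open sets of $Y$ that converts ``$f(x)\in V$'' into ``$f_i(x)\in V$ for some $i$'' and thereby identifies $f^{-1}(V)$ with the directed union $\bigcup_i f_i^{-1}(V)$. The remaining verifications — that the relevant families are directed in $Y$ and in $\mathcal O(X)$, and that $N(\mathcal H\leftarrow V)$ is an upper set — are routine.
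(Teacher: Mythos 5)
Your argument is correct: it verifies directly that $\mathbb{C}(X,Y)$ under the pointwise order is a dcpo (via continuity of pointwise sups, which is the paper's Lemma \ref{3dl16}) and that every subbasic Isbell-open set $N(\mathcal H\leftarrow V)$ is Scott-open, which together give both claims of the proposition. The paper itself only cites \cite[Lemma II-4.3]{GHKLMS} for this statement, and your computation is essentially the standard one found there and mirrors the paper's own direct proof of the compact-open analogue in Remark \ref{3dl15+}, so there is nothing genuinely different to flag.
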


From Lemma \ref{3yl1} and Proposition \ref{3dl16+} we deduce the following result.

\begin{corollary}\label{3tl15} For a $T_0$ space $X$ and a $d$-space $Y$, the function space $\mathbb{C}(X, Y)$ equipped with the compact-open topology is a $d$-space.
\end{corollary}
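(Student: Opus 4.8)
The plan is to combine the nesting of the four function-space topologies from Lemma \ref{3yl1} with the $d$-space result for the Isbell topology in Proposition \ref{3dl16+}. Recall that a $T_0$ space $Z$ is a $d$-space exactly when $\Omega Z$ is a dcpo and $\mathcal O(Z)\subseteq \sigma(\Omega Z)$. Since the specialization orders on $[X\rightarrow Y]_{P}$, $[X\rightarrow Y]_{K}$, $[X\rightarrow Y]_{I}$ and $[X\rightarrow Y]_{\Sigma}$ all coincide with the pointwise order on $\mathbb C(X, Y)$ (as noted just before Lemma \ref{3yl1}), it suffices to verify these two conditions for the compact-open topology, keeping the underlying poset fixed as $\mathbb{C}(X, Y)$ with the pointwise order.

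For the dcpo condition: by Proposition \ref{3dl16+}, $[X\rightarrow Y]_{I}$ is a $d$-space, so its specialization poset --- namely $\mathbb C(X, Y)$ with the pointwise order --- is a dcpo, and this is precisely $\Omega[X\rightarrow Y]_{K}$. For the condition $\mathcal O([X\rightarrow Y]_{K})\subseteq \sigma(\mathbb C(X, Y))$: again by Proposition \ref{3dl16+}, $[X\rightarrow Y]_{I}$ being a $d$-space gives $\mathcal O([X\rightarrow Y]_{I})\subseteq \sigma(\mathbb C(X, Y))$; and by Lemma \ref{3yl1} the Isbell topology is finer than the compact-open topology, i.e. $\mathcal O([X\rightarrow Y]_{K})\subseteq \mathcal O([X\rightarrow Y]_{I})$. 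Chaining these inclusions yields $\mathcal O([X\rightarrow Y]_{K})\subseteq \sigma(\mathbb C(X, Y))$, so $[X\rightarrow Y]_{K}$ is a $d$-space.

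There is essentially no genuine obstacle here; the only point requiring care is to keep the reference poset for the Scott topology fixed as $(\mathbb C(X, Y),\leq)$ with the pointwise order throughout, which is legitimate precisely because all four function-space topologies induce that same specialization order. An equally short alternative route is to use Corollary \ref{3tl9} (applied to $d$-spaces) to get that $[X\rightarrow Y]_{P}$ is a $d$-space --- hence $\mathbb C(X, Y)$ is a dcpo --- and then establish the open-set inclusion exactly as above via the chain $\mathcal O([X\rightarrow Y]_{P})\subseteq \mathcal O([X\rightarrow Y]_{K})\subseteq \mathcal O([X\rightarrow Y]_{I})\subseteq \sigma(\mathbb C(X, Y))$.
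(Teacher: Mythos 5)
Your proof is correct and follows essentially the same route as the paper, which deduces the corollary directly from Lemma \ref{3yl1} and Proposition \ref{3dl16+}; you have merely spelled out the details (identical specialization orders, the dcpo condition, and the chain $\mathcal O([X\rightarrow Y]_{K})\subseteq\mathcal O([X\rightarrow Y]_{I})\subseteq\sigma(\mathbb{C}(X,Y))$) that the paper leaves implicit. The paper also records a second, direct argument in Remark \ref{3dl15+}, but that is an alternative rather than the primary proof.
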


By Corollary \ref{3mt8+} and Corollary \ref{3tl15}, we get the following.

\begin{corollary}\label{3tl9+} For $T_0$ spaces $X$ and $Y$, the following two conditions are equivalent:
\begin{enumerate}[\rm (1)]
\item $Y$ is a $d$-space.
\item The function space $\mathbb{C}(X, Y)$ equipped with the compact-open topology is a $d$-space.
\end{enumerate}
\end{corollary}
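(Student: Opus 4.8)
The plan is to deduce the corollary by establishing the two implications separately, each time quoting a result already proved above, so that only the assembly remains.

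For $(1)\Rightarrow(2)$, suppose $Y$ is a $d$-space. Then I would invoke Corollary~\ref{3tl15}: for any $T_0$ space $X$ and any $d$-space $Y$, the function space $\mathbb{C}(X,Y)$ equipped with the compact-open topology is a $d$-space. Unpacking this: Proposition~\ref{3dl16+} gives that $[X\rightarrow Y]_{I}$ is a $d$-space, and by Lemma~\ref{3yl1} the compact-open topology is coarser than the Isbell topology; since, as noted just before Lemma~\ref{3yl1}, both function-space topologies induce the \emph{same} pointwise specialization order, $\mathbb{C}(X,Y)$ remains a dcpo, and $\mathcal O([X\rightarrow Y]_{K})\subseteq \mathcal O([X\rightarrow Y]_{I})\subseteq \sigma(\mathbb{C}(X,Y))$, so $[X\rightarrow Y]_{K}$ is a $d$-space.

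For $(2)\Rightarrow(1)$, suppose $[X\rightarrow Y]_{K}$ is a $d$-space. Recalling that the $d$-spaces are exactly the $\mathrm H$-sober spaces for $\mathrm H=\mathcal D$ (Proposition~\ref{2mt1} together with Definition~\ref{2dy9}), this is the $\mathrm H=\mathcal D$ instance of Proposition~\ref{3mt8}(1), or equivalently of Corollary~\ref{3mt8+}: fix any $x\in X$; the maps $\psi^{K}:Y\rightarrow[X\rightarrow Y]_{K}$, $y\mapsto c_{y}$, and $\varphi^{K}_{x}:[X\rightarrow Y]_{K}\rightarrow Y$, $f\mapsto f(x)$, are continuous by Corollary~\ref{3tl5}(1) and Corollary~\ref{3tl3}(1), and $\varphi^{K}_{x}\circ\psi^{K}=id_{Y}$, so $Y$ is a retract of $[X\rightarrow Y]_{K}$; hence $Y$ is a $d$-space by Corollary~\ref{3mt6-} (the $d$-space case of Lemma~\ref{3mt6}).

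I do not anticipate a real obstacle: the substance is already in Proposition~\ref{3dl16+} and Proposition~\ref{3mt8}, and the corollary merely records their combination for $\mathrm H=\mathcal D$. The only point that warrants a moment of care is the downward passage from the Isbell to the compact-open topology in $(1)\Rightarrow(2)$; it goes through precisely because the two topologies share the same specialization order, so that no new directed subsets or suprema are introduced and every compact-open open set is automatically Scott open once the Isbell ones are.
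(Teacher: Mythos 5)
Your proposal is correct and follows essentially the same route as the paper: the paper proves this corollary precisely by combining Corollary~\ref{3tl15} (itself deduced from Proposition~\ref{3dl16+} and Lemma~\ref{3yl1}) for $(1)\Rightarrow(2)$ with Corollary~\ref{3mt8+}, i.e.\ the retract argument of Proposition~\ref{3mt8}(1) specialized to $\mathrm H=\mathcal D$, for $(2)\Rightarrow(1)$. Your extra remark about the shared specialization order justifying the passage from the Isbell to the compact-open topology is exactly the point implicit in the paper's derivation of Corollary~\ref{3tl15}.
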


\begin{remark} \label{3dl15+} We can give a direct proof of Corollary \ref{3tl15}.

\begin{proof}  Suppose that $X$ is a $T_0$ space and $Y$ is a $d$-space. We show that the function space $\mathbb{C}(X, Y)$ equipped with the compact-open topology is a $d$-space.

Let $\mathcal {F}\in \mathcal{D}([X\rightarrow Y]_{K})$. Since the specialization order on $[X\rightarrow Y]_{K}$ is the usually pointwise order on $\mathbb{C}(X, Y)$, we have that for each $x\in X$, $\{f(x): f\in \mathcal {F}\}\in \mathcal{D}(Y)$. As $Y$ is a $d$-space, there exists a unique point $a_{x}\in Y$ such that $\overline{\{f(x):f\in \mathcal {F}\}}=\overline{\{a_{x}\}}$. Then we can define a function $g:X\longrightarrow Y$ by $g(x)=a_{x}$ for each $x\in X$. It is straightforward to verify that $g$ is continuous (see Claim 1 in the proof of Theorem \ref{3dl11} below). Whence $g$ is an upper bound of $\mathcal{F}$ in $[X\rightarrow Y]_{K}$, and consequently, $\cl_{[X\rightarrow Y]_{I}}\mathcal {F}\subseteq \cl_{[X\rightarrow Y]_{I}}\{g\}$.

Now we show that $g\in \cl_{[X\rightarrow Y]_{K}}\mathcal {F}$. Let $N(K\rightarrow V)$ be a subbasis open set in $[X\rightarrow Y]_{K}$ with $g\in N(K\rightarrow V)$, where $K$ is a compact subset of $X$ and $V$ is an open subset of $Y$. Then for any $x\in K$, $g(x)\in V$. As $\overline{\{f(x):f\in \mathcal {F}\}}=\overline{\{g(x)\}}$, there exists $f\in \mathcal {F}$ with $f(x)\in V$, whence $x\in f^{-1}(V)$. Thus $K\subseteq \bigcup_{f\in \mathcal {F}}f^{-1}(V)$. By the compactness of $K$, there exists $\{f_{1}, f_{2},\cdots, f_{n}\}\subseteq F$ such that $K\subseteq \bigcup_{i=1}^{n}f_{i}^{-1}(V)$. Since $\mathcal {F}$ is directed in $[X\rightarrow Y]_{K}$, there is $h\in \mathcal {F}$ such that $f_{i}\leq h$ for all $i\in \{1, 2, \cdots, n\}$. Therefore, $K\subseteq \bigcup_{i=1}^{n}f_{i}^{-1}(V)\subseteq h^{-1}(V)$ (note that open sets in $Y$ are upper sets with the specialization order of $Y$), i.e., $h\in N(K\rightarrow V)$. So  $\mathcal {F}\cap N(K\rightarrow V)\neq \emptyset$. Since $\mathcal {F}\in \mathcal{D}([X\rightarrow Y]_{K})\subseteq \ir ([X\rightarrow Y]_{K})$ and all basic open sets of $g$ in $[X\rightarrow Y]_{K}$ must meet $\mathcal {F}$, we get that $g\in \cl_{[X\rightarrow Y]_{K}}\mathcal {F}$, and hence $\cl_{[X\rightarrow Y]_{K}}\mathcal {F}=\cl_{[X\rightarrow Y]_{K}}\{g\}$. Therefore, the function space $\mathbb{C}(X, Y)$ equipped with the compact-open topology is a $d$-space.

\end{proof}
\end{remark}

But we do not know whether the converse of Proposition \ref{3mt8} (1) holds (especially, whether Corollary \ref{3tl15} holds for well-filtered spaces and sober spaces). Therefore, we pose the following questions.

\begin{question} \label{3wt1}
For a $T_{0}$ space $X$ and an H-sober space $Y$, is the function space $\mathbb{C}(X, Y)$ equipped with the compact-open topology H-sober?
\end{question}

\begin{question} \label{3wt2}
For a $T_{0}$ space $X$ and a sober (resp., well-filtered) space $Y$, is the function space $\mathbb{C}(X, Y)$ equipped with the compact-open topology sober (resp., well-filtered)?
\end{question}

\begin{proposition} \label{3dl7}
For $T_{0}$ spaces $X$ and $Y$, if the function space $\mathbb{C}(X, Y)$ equipped with the Isbell topology is H-sober, then $Y$ is H-sober.
\end{proposition}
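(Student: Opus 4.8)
The plan is to follow exactly the strategy used for Proposition~\ref{3mt8}: exhibit $Y$ as a retract of the function space $[X\rightarrow Y]_{I}$, and then invoke Lemma~\ref{3mt6}, which asserts that a retract of an H-sober space is H-sober. So the whole argument reduces to producing a continuous section $Y\to[X\rightarrow Y]_{I}$ together with a continuous retraction $[X\rightarrow Y]_{I}\to Y$ whose composite is the identity on $Y$.

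First I would fix a point $x\in X$. By Corollary~\ref{3tl3}(2) the evaluation map $\varphi^{I}_{x}:[X\rightarrow Y]_{I}\longrightarrow Y$, $f\mapsto f(x)$, is continuous, and by Lemma~\ref{3yl4} the map $\psi^{I}:Y\longrightarrow[X\rightarrow Y]_{I}$, $y\mapsto c_{y}$, sending a point of $Y$ to the corresponding constant function, is continuous. Next I would observe that for every $y\in Y$ one has $(\varphi^{I}_{x}\circ\psi^{I})(y)=\varphi^{I}_{x}(c_{y})=c_{y}(x)=y$, so that $\varphi^{I}_{x}\circ\psi^{I}=id_{Y}$. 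Hence $\psi^{I}$ and $\varphi^{I}_{x}$ witness $Y$ as a retract of $[X\rightarrow Y]_{I}$. Since $[X\rightarrow Y]_{I}$ is H-sober by hypothesis, Lemma~\ref{3mt6} then yields that $Y$ is H-sober.

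There is no genuinely difficult step here: the proposition is a formal consequence of three already-established ingredients — continuity of point evaluation for the Isbell topology (Corollary~\ref{3tl3}(2), which itself rests on Lemma~\ref{3yl1} comparing the Isbell, compact-open and pointwise topologies), continuity of $y\mapsto c_{y}$ as a map into $[X\rightarrow Y]_{I}$ (Lemma~\ref{3yl4}), and stability of H-soberness under retracts (Lemma~\ref{3mt6}). The only points requiring a moment's attention are the trivial verification that $\varphi^{I}_{x}\circ\psi^{I}=id_{Y}$ and the implicit assumption that $X\neq\emptyset$, which is needed in order to choose the point $x$ at the outset; both are routine, the latter being a standing convention in this setting (and the same convention was already used in the proof of Proposition~\ref{3mt8}). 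Together with Proposition~\ref{3dl16+} and Corollary~\ref{3dl9}, this proposition is exactly what is needed to obtain the promised equivalence among H-soberness of $Y$, of $[X\rightarrow Y]_{P}$, and of $[X\rightarrow Y]_{I}$.
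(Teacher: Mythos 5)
Your proof is correct and follows exactly the same route as the paper's: fix $x\in X$, use the continuity of $\varphi^{I}_{x}$ (Corollary~\ref{3tl3}(2)) and $\psi^{I}$ (Lemma~\ref{3yl4}) to exhibit $Y$ as a retract of $[X\rightarrow Y]_{I}$, and conclude by Lemma~\ref{3mt6}. Your remark about needing $X\neq\emptyset$ is a fair (if minor) point that the paper leaves implicit.
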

\begin{proof}
Suppose that $[X\rightarrow Y]_{\Sigma}$ is H-sober. Select an $x\in X$. Then for any $y\in Y$, $(\varphi^{I}_x \circ \psi^{I})(y)=\varphi^{I}_x (c_y)=y$ and $\varphi^{I}_x$, $\psi^{I}$ are continuous by Corollary \ref{3tl3} (2) and Lemma \ref{3yl4}. Thus $Y$ is a retract of $[X\rightarrow Y]_{I}$. Hence $Y$ is H-sober by Lemma \ref{3mt6}.
\end{proof}

\begin{lemma}\label{3yl9}
Let $X$, $Y$ be topological spaces and $V$ is open in $Y$. Consider the function $$\zeta_V:[X\rightarrow Y]_{I}\longrightarrow \Sigma \mathcal{O}(X),$$ $$\!~~~~~~~~~~~~~f\!~~~\mapsto \!~f^{-1}(V).$$ Then $\zeta_V$ is continuous.
\end{lemma}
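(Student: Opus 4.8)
The plan is to unwind the two topologies involved and observe that the preimage under $\zeta_V$ of a subbasic open set of $\Sigma\mathcal{O}(X)$ is \emph{literally} a subbasic open set of $[X\rightarrow Y]_I$. First I would recall that, by definition, a subbasis (in fact a basis) for the Scott space $\Sigma\mathcal{O}(X)$ is given by the Scott open subsets $\mathcal{H}$ of the complete lattice $\mathcal{O}(X)$; so to prove continuity of $\zeta_V$ it suffices to check that $\zeta_V^{-1}(\mathcal{H})$ is open in $[X\rightarrow Y]_I$ for every Scott open $\mathcal{H}\subseteq\mathcal{O}(X)$.

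Next I would compute this preimage directly: for $f\in\mathbb{C}(X,Y)$ we have $f\in\zeta_V^{-1}(\mathcal{H})$ iff $\zeta_V(f)=f^{-1}(V)\in\mathcal{H}$, which is exactly the defining condition of the set $N(\mathcal{H}\leftarrow V)=\{f\in\mathbb{C}(X,Y):f^{-1}(V)\in\mathcal{H}\}$. Hence $\zeta_V^{-1}(\mathcal{H})=N(\mathcal{H}\leftarrow V)$. Since $\mathcal{H}$ is Scott open in $\mathcal{O}(X)$ and $V$ is open in $Y$, the set $N(\mathcal{H}\leftarrow V)$ is one of the generating subbasic open sets of the Isbell topology (Definition \ref{3dy1}(2)), and in particular is open in $[X\rightarrow Y]_I$. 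Therefore $\zeta_V$ is continuous.

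There is essentially no obstacle here: the statement is a direct translation of definitions, and the only thing one must be slightly careful about is the (well-known) fact that $f^{-1}(V)$ is indeed an open subset of $X$ for $f$ continuous and $V$ open, so that $\zeta_V$ is a well-defined map into $\mathcal{O}(X)$ in the first place; this is immediate. I would keep the write-up to a few lines.
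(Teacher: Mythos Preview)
Your proof is correct and essentially identical to the paper's: both observe that $\zeta_V^{-1}(\mathcal H)=N(\mathcal H\leftarrow V)$ is a subbasic Isbell-open set. The paper phrases it pointwise (given $f$ with $f^{-1}(V)\in\mathcal H$, exhibit the open neighborhood $N(\mathcal H\leftarrow V)$ mapped into $\mathcal H$), whereas you compute the preimage globally, but this is the same argument.
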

\begin{proof}
Suppose $V\in \mathcal{O}(Y)$, $f\in [X\rightarrow Y]_{I}$ and $\zeta_V(f)=f^{-1}(V)\in \mathcal H$, where $\mathcal H$ is Scott open in $\mathcal{O}(X)$. Then $f\in N(\mathcal H\leftarrow V)$, $N(\mathcal H\leftarrow V)$ is open in $[X\rightarrow Y]_{I}$ and for any $g\in N(\mathcal H\leftarrow V)$, $\zeta_V(g)=g^{-1}(V)\in \mathcal H$, whence $\zeta_V (N(\mathcal H\leftarrow V))\subseteq \mathcal H$. It follows that $\zeta_V:[X\rightarrow Y]_{I}\longrightarrow \Sigma \mathcal{O}(X)$ is continuous.
\end{proof}

\begin{lemma} (\cite{Xu2})\label{3yl10}
Let $H:\mathbf{Top}_{0}\longrightarrow \mathbf{Set}$ be an R-subset system. Then for any $T_{0}$ space $X$, $P_{H}(H_{c}^{d}(X))$ is an H-sober space. In fact, for any $\{A_{i}:i\in I\}\in H(P_{H}(H_{c}^{d}(X)))$, $A=\overline{\bigcup_{i\in I}A_{i}}\in H_{c}^{d}(X)$ and $\overline{\{A_{i}:i\in I\}}=\overline{\{A\}}$ in $P_{H}(H_{c}^{d}(X))$. Moreover, $X^{h}=P_{H}(H_{c}^{d}(X))$ with the canonical topological embedding $\eta_{X}^{h}: X\longrightarrow X^{h}$ is the H-sobrification of $X$, where $\eta_{X}^{h}(x)=\overline{\{x\}}$ for all $x\in X$.
\end{lemma}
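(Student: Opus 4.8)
The plan is to describe $X^{h}=P_{H}(H_{c}^{d}(X))$ concretely, to compute its closure operator, and then to transport any H-set of $X^{h}$ through a canonically constructed continuous extension of maps into H-sober spaces. First I would record that $X^{h}$ is a well-defined $T_{0}$ space: singletons are trivially H-sober determined, so $\mathcal{S}_{c}(X)\subseteq H_{c}^{d}(X)$, and every member of $H_{c}^{d}(X)$ is closed and irreducible. Indeed, for $A\in H^{d}(X)$ one applies Definition \ref{2dy11} to $\eta_{X}:X\to X^{s}$ (the sobrification $X^{s}$ is sober, hence H-sober because $H(X^{s})\subseteq\ir(X^{s})$) to obtain a unique $y_{A}\in\ir_{c}(X)$ with $\overline{\eta_{X}(A)}=\overline{\{y_{A}\}}$ in $X^{s}$, and since $\eta_{X}$ is an embedding (Remark \ref{2zj1}(2)) this forces $\overline{A}=y_{A}\in\ir_{c}(X)$. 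Then $\{\Diamond_{H_{c}^{d}(X)}U:U\in\mathcal{O}(X)\}$ is a genuine topology, and by Remark \ref{2zj1}(1) the specialization order of $X^{h}$ is set inclusion while $\eta_{X}^{h}:X\to X^{h}$, $x\mapsto\overline{\{x\}}$, is an order and topological embedding.

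Second, I would pin down closures in $X^{h}$. For $\mathcal{F}\subseteq H_{c}^{d}(X)$ and $B\in H_{c}^{d}(X)$, $B\in\cl_{X^{h}}\mathcal{F}$ iff every subbasic open $\Diamond U$ containing $B$ meets $\mathcal{F}$; splitting on whether $B\subseteq\overline{\bigcup\mathcal{F}}$ and using $U=X\setminus\overline{\bigcup\mathcal{F}}$ in the negative case shows that this holds exactly when $B\subseteq\overline{\bigcup\mathcal{F}}$. In particular $\cl_{X^{h}}\{B\}=\{C\in H_{c}^{d}(X):C\subseteq B\}$, and also $\cl_{X^{h}}\eta_{X}^{h}(B)=\cl_{X^{h}}\{B\}$ (since $\bigcup_{b\in B}\overline{\{b\}}=\da B=B$ when $B$ is closed).

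The heart of the argument is the claim that if $\mathcal{F}=\{A_{i}:i\in I\}\in H(X^{h})$ then $A:=\overline{\bigcup_{i\in I}A_{i}}\in H_{c}^{d}(X)$. Fix a continuous $g:X\to Y$ with $Y$ H-sober. Each $B\in H_{c}^{d}(X)$ is H-sober determined, so there is a unique $y_{B}\in Y$ with $\overline{g(B)}=\overline{\{y_{B}\}}$; put $\hat{g}(B)=y_{B}$. The map $\hat{g}:X^{h}\to Y$ is continuous, since for $V\in\mathcal{O}(Y)$ one computes $\hat{g}^{-1}(V)=\{B:\overline{g(B)}\cap V\neq\emptyset\}=\{B:B\cap g^{-1}(V)\neq\emptyset\}=\Diamond_{H_{c}^{d}(X)}g^{-1}(V)$, and $\hat{g}\circ\eta_{X}^{h}=g$ by $T_{0}$-ness of $Y$ together with continuity of $g$. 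Since $\hat{g}$ is continuous and $\mathcal{F}\in H(X^{h})$, the subset-system axiom gives $\hat{g}(\mathcal{F})=\{y_{A_{i}}:i\in I\}\in H(Y)$, so there is $z\in Y$ with $\cl_{Y}\hat{g}(\mathcal{F})=\overline{\{z\}}$. Chasing closures (and using $\overline{g(A)}=\overline{g(\bigcup_{i}A_{i})}$ by continuity of $g$),
\[
\overline{\{z\}}=\overline{\{y_{A_{i}}:i\in I\}}=\overline{\textstyle\bigcup_{i}\overline{g(A_{i})}}=\overline{\textstyle\bigcup_{i}g(A_{i})}=\overline{g(\textstyle\bigcup_{i}A_{i})}=\overline{g(A)},
\]
so $\overline{g(A)}=\overline{\{z\}}$ with $z$ unique; as $A$ is closed this means $A\in H_{c}^{d}(X)$. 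Combining with the closure formula, $\cl_{X^{h}}\{A_{i}:i\in I\}=\{B\in H_{c}^{d}(X):B\subseteq A\}=\cl_{X^{h}}\{A\}$, and $A$ is the unique such point because $\leq_{X^{h}}$ is inclusion. Hence every $\mathcal{F}\in H(X^{h})$ has $\cl_{X^{h}}\mathcal{F}=\cl_{X^{h}}\{A\}$ for a unique $A\in X^{h}$, i.e. $X^{h}$ is H-sober (Definition \ref{2dy9}).

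Finally, for the H-sobrification statement it remains to check the universal property: $\eta_{X}^{h}$ embeds $X$ into the H-sober space $X^{h}$, and given continuous $g:X\to Y$ with $Y$ H-sober, the map $\hat{g}$ above is continuous with $\hat{g}\circ\eta_{X}^{h}=g$; it is the only such map, because any continuous $\bar{g}:X^{h}\to Y$ with $\bar{g}\circ\eta_{X}^{h}=g$ satisfies, via $\cl_{X^{h}}\{B\}=\cl_{X^{h}}\eta_{X}^{h}(B)$ together with monotonicity and continuity of $\bar{g}$, that $\overline{\{\bar{g}(B)\}}=\overline{g(B)}$, forcing $\bar{g}(B)=y_{B}=\hat{g}(B)$. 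The main obstacle I anticipate is precisely the middle step — building the extension $\hat{g}$ and running the chain of closures to bring $A$ back inside $H_{c}^{d}(X)$; once the closure operator on $X^{h}$ is identified, everything else is routine bookkeeping.
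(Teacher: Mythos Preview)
The paper does not give its own proof of this lemma: it is quoted verbatim as a result of \cite{Xu2} and used as a black box in the proof of Theorem~\ref{3dl11}. So there is nothing in the present paper to compare your argument against; you have supplied a self-contained proof where the authors simply cite one.

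That said, your reconstruction is correct and is essentially the standard argument one would expect to find in \cite{Xu2}. The three ingredients are exactly right: (i) elements of $H_{c}^{d}(X)$ are irreducible closed (via the sobrification, which is H-sober since $H\leq\mathcal{R}$), so that the lower Vietoris topology on $H_{c}^{d}(X)$ is genuinely a topology and the closure formula $\cl_{X^{h}}\mathcal{F}=\{B:B\subseteq\overline{\bigcup\mathcal{F}}\}$ holds; (ii) the canonical extension $\hat{g}:X^{h}\to Y$, $B\mapsto y_{B}$, is continuous with $\hat{g}^{-1}(V)=\Diamond g^{-1}(V)$, so the functoriality axiom of $H$ pushes $\{A_{i}\}\in H(X^{h})$ to $\{y_{A_{i}}\}\in H(Y)$; and (iii) the closure chain $\overline{\{y_{A_{i}}\}}=\overline{\bigcup_{i}\overline{g(A_{i})}}=\overline{g(A)}$ shows $A=\overline{\bigcup_{i}A_{i}}\in H_{c}^{d}(X)$, whence $\cl_{X^{h}}\{A_{i}\}=\cl_{X^{h}}\{A\}$. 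Uniqueness of $\hat{g}$ via $\cl_{X^{h}}\{B\}=\cl_{X^{h}}\eta_{X}^{h}(B)$ then gives the universal property. One minor point worth making explicit in (i): the ``subbasic'' open sets $\Diamond U$ already form the entire topology (because $H_{c}^{d}(X)\subseteq\ir_{c}(X)$, cf.\ the remark after Remark~\ref{2zj1}), which is what legitimizes deducing $B\in\cl_{X^{h}}\mathcal{F}$ from intersection with every $\Diamond U$.
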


One of the main results of this paper is the following.

\begin{theorem} \label{3dl11}
Let $H:\mathbf{Top}_{0}\longrightarrow \mathbf{Set}$ be an R-subset system. Then for a $T_{0}$ space $X$ and an H-sober space $Y$, the function space $\mathbb{C}(X, Y)$ equipped with the Isbell topology is an H-sober space.
\end{theorem}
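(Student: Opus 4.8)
The plan is to prove that for every $A\in{\rm H}([X\rightarrow Y]_I)$ there is a necessarily unique $g\in\mathbb{C}(X,Y)$ with $\overline{A}=\overline{\{g\}}$ in $[X\rightarrow Y]_I$ (uniqueness is automatic, since the specialization order of $[X\rightarrow Y]_I$ is the pointwise order, so the space is $T_0$). I would build $g$ pointwise: for $x\in X$ the evaluation $\varphi^I_x:[X\rightarrow Y]_I\rightarrow Y$ is continuous by Corollary \ref{3tl3}(2), hence $\varphi^I_x(A)=\{f(x):f\in A\}\in{\rm H}(Y)$, and since $Y$ is H-sober there is a unique $a_x\in Y$ with $\overline{\{f(x):f\in A\}}=\overline{\{a_x\}}$; put $g(x)=a_x$. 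Continuity of $g$ — this is ``Claim 1'' of Remark \ref{3dl15+} — is immediate once one observes that for $V\in\mathcal{O}(Y)$, $g^{-1}(V)=\{x:a_x\in V\}=\{x:V\cap\overline{\{f(x):f\in A\}}\neq\emptyset\}=\{x:f(x)\in V\text{ for some }f\in A\}=\bigcup_{f\in A}f^{-1}(V)$, which is open.

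Next I would dispatch the easy inclusion $\overline{A}\subseteq\overline{\{g\}}$: for $f\in A$ and $x\in X$ we have $f(x)\in\{h(x):h\in A\}\subseteq\overline{\{g(x)\}}$, so $f(x)\leq_Y g(x)$, i.e. $f\leq g$ in the pointwise order, which is the specialization order of $[X\rightarrow Y]_I$; hence $A\subseteq\overline{\{g\}}$ and $\overline{A}\subseteq\overline{\{g\}}$. The remaining and main point is $g\in\overline{A}$ in $[X\rightarrow Y]_I$. Since ${\rm H}([X\rightarrow Y]_I)\subseteq\ir([X\rightarrow Y]_I)$, the set $A$ is irreducible, so any finitely many nonempty relatively open subsets of $A$ have nonempty intersection; therefore it suffices to show that every subbasic Isbell-open set $N(\mathcal H\leftarrow V)$ containing $g$ meets $A$. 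So fix a Scott open $\mathcal H\subseteq\mathcal{O}(X)$ and $V\in\mathcal{O}(Y)$ with $g^{-1}(V)\in\mathcal H$.

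To handle this I would pass through the map $\zeta_V:[X\rightarrow Y]_I\rightarrow\Sigma\mathcal{O}(X)$, $f\mapsto f^{-1}(V)$, which is continuous by Lemma \ref{3yl9}. Then $\mathcal U:=\zeta_V(A)=\{f^{-1}(V):f\in A\}\in{\rm H}(\Sigma\mathcal{O}(X))$, and the computation above gives $\bigvee\mathcal U=\bigcup_{f\in A}f^{-1}(V)=g^{-1}(V)\in\mathcal H$. Since $N(\mathcal H\leftarrow V)\cap A\neq\emptyset$ means exactly $\mathcal H\cap\mathcal U\neq\emptyset$, and $\mathcal H$ is an open neighbourhood of $\bigvee\mathcal U$ in $\Sigma\mathcal{O}(X)$, this will follow as soon as we know $\bigvee\mathcal U\in\overline{\mathcal U}$ in $\Sigma\mathcal{O}(X)$ (an open set meeting the closure of a set meets the set). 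Equivalently, because $\mathcal U\subseteq\da(\bigvee\mathcal U)$ and $\da(\bigvee\mathcal U)$ is Scott closed, what is needed is $\overline{\mathcal U}=\overline{\{\bigvee\mathcal U\}}$, i.e. that $\Sigma\mathcal{O}(X)$ is H-sober, with $\bigvee\mathcal U$ as the distinguished point of $\mathcal U$. Combining this with $\overline{A}\subseteq\overline{\{g\}}$ gives $\overline{A}=\overline{\{g\}}$, so $[X\rightarrow Y]_I$ is H-sober.

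The hard part will therefore be the auxiliary fact that $\Sigma\mathcal{O}(X)$ is H-sober for an arbitrary $T_0$ space $X$, which I would isolate as a preliminary lemma. For ${\rm H}\leq\mathcal D$ it is trivial: $\mathcal{O}(X)$ is a dcpo, and the supremum of a directed family is its directed join, which lies in every Scott closed set containing the family. For ${\rm H}=\mathcal R$, and hence (since ${\rm H}(Z)\subseteq\mathcal R(Z)$ always, Definition \ref{2dy3}) for every ${\rm H}$, it is the statement that $\Sigma\mathcal{O}(X)$ is sober, i.e. that every irreducible Scott closed subset $C$ of the frame $\mathcal{O}(X)$ is principal; as $C$ is downward closed this reduces to showing $\bigcup C\in C$, equivalently that $C$ is closed under finite unions. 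The leverage is that each $\{U\in\mathcal{O}(X):x\in U\}$ is Scott open, so irreducibility of $C$ yields that every finite subset of $\bigcup C$ lies in some member of $C$; converting this into closure under finite unions (and then invoking Scott closedness on the resulting directed family) is where essentially all the content of the proof is concentrated, the remaining steps above being routine.
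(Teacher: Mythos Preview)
Your overall architecture matches the paper's proof closely: construct $g$ pointwise via H-sobriety of $Y$, show $\overline{A}\subseteq\overline{\{g\}}$ by the pointwise order, and for $g\in\overline{A}$ reduce to subbasic Isbell opens and push through the continuous map $\zeta_V:[X\to Y]_I\to\Sigma\mathcal{O}(X)$ of Lemma~\ref{3yl9}. Your formula $g^{-1}(V)=\bigcup_{f\in A}f^{-1}(V)$ is a clean sharpening of the paper's Claim~1 and immediately gives $g^{-1}(V)\in\mathcal H$.

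The divergence is in how the crux ``$\bigvee\mathcal U\in\overline{\mathcal U}$ in $\Sigma\mathcal{O}(X)$'' is handled. The paper does \emph{not} try to prove that $\Sigma\mathcal{O}(X)$ is sober. Instead it composes $\zeta_W$ with the H-sobrification embedding $\eta^h_{\Sigma\mathcal{O}(X)}:\Sigma\mathcal{O}(X)\to P_H\!\big(H^d_c(\Sigma\mathcal{O}(X))\big)$ and invokes Lemma~\ref{3yl10}: that lemma furnishes, for the H-set $\{\mathrm{cl}\{f^{-1}(W)\}:f\in\mathcal F\}$, an explicit point $A\in H^d_c(\Sigma\mathcal{O}(X))$ with $\overline{\{\mathrm{cl}\{f^{-1}(W)\}\}}=\overline{\{A\}}$ in the (always H-sober) space $P_H(H^d_c(\Sigma\mathcal{O}(X)))$, and then argues $A\in\Diamond\mathcal H$. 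The point is that the H-sobrification is H-sober \emph{by construction}, so no separate sobriety result for $\Sigma\mathcal{O}(X)$ is needed.

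Your route replaces this machinery by the auxiliary lemma ``$\Sigma\mathcal{O}(X)$ is sober for every $T_0$ space $X$''. That is a substantial claim, and your sketch does not establish it. The step you flag --- passing from ``every finite subset of $\bigcup C$ lies in some member of $C$'' to ``$C$ is closed under finite unions'' --- is not a formality: from $\{x_1,\dots,x_n\}\subseteq V$ for some $V\in C$ you cannot conclude $U_1\cup U_2\in C$ when $U_1,U_2\in C$ are infinite open sets, and the filters $\{U:x\in U\}$ give you no leverage on an entire open set at once. Without this step you do not get directedness of $C$, and Scott-closedness alone does not force $\bigcup C\in C$. Sobriety of $\Sigma L$ for an arbitrary frame $L=\mathcal{O}(X)$ is not a standard elementary fact (it is automatic only under extra hypotheses such as continuity of $L$, i.e.\ core-compactness of $X$), so this is a genuine gap in your plan. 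If you want to keep your line of argument, you should either supply a complete proof that $\Sigma\mathcal{O}(X)$ is sober for all $T_0$ $X$, or --- following the paper --- bypass the issue by working in the H-sobrification $P_H(H^d_c(\Sigma\mathcal{O}(X)))$ via Lemma~\ref{3yl10}.
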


\begin{proof}
Let $\mathcal {F}\in H([X\rightarrow Y]_{I})$. Since the specialization order on $[X\rightarrow Y]_{I}$ is the usually pointwise order on $\mathbb{C}(X, Y)$, $[X\rightarrow Y]_{I}$ is $T_{0}$. For $x\in X$, by Corollary \ref{3tl3} (2), the function $\varphi^{I}_x:[X\rightarrow Y]_{I}\longrightarrow Y$,  $f\mapsto f(x)$, is continuous. So $H(\varphi^{I}_x)(\mathcal {F})=\varphi^{I}_x(\mathcal {F})=\{f(x): f\in \mathcal {F}\}\in H(Y)$. As $Y$ is H-sober, there is a unique point $a_{x}\in Y$ such that $\overline{\{f(x):f\in \mathcal {\mathcal {F}}\}}=\overline{\{a_{x}\}}$. Now we can define a function $$g:X\longrightarrow Y \ \  by  \ \  g(x)=a_{x} \hbox{~for~each~}  x\in X.$$

\smallskip
{\bf Claim 1:} $g:X\longrightarrow Y$ is continuous.

\smallskip
Let $x\in X$ and $V\in \mathcal{O}(Y)$ with $g(x)=a_{x}\in V$. Then by $\overline{\{a_{x}\}}=\overline{\{f(x):f\in \mathcal {\mathcal {F}}\}}=$, we have $\{f(x):f\in \mathcal {F}\}\bigcap V\neq \emptyset$, whence there is $f\in \mathcal {F}$ such that $f(x)\in V$. As $f:X\longrightarrow Y$ is continuous, there is a $U\in \mathcal{O}(X)$ with $x\in U$ and $f(U)\subseteq V$. For any $z\in U$, $f(z)\in \overline{\{f(z): f\in \mathcal {F}\}}=\overline{\{a_{z}\}}=\overline{\{g(z)\}}$, and hence $g(z)\in V$ (note that $V$ is an upper set in $Y$ with the specialization order). So $g(U)\subseteq V$. Thus $g$ is continuous.

\smallskip
{\bf Claim 2:} For any subbasis open set $N(\mathcal H\leftarrow W)$ in $[X\rightarrow Y]_{I}$ with $g\in N(\mathcal H\leftarrow W)$, where $\mathcal H$ is a Scott open subset of $\mathcal{O}(X)$ and $W$ is an open subset of $Y$, we have $\mathcal {F}\bigcap N(\mathcal H\leftarrow W)\neq \emptyset$.

\smallskip
Since $g\in N(\mathcal H\leftarrow W)$, $W\in \mathcal{O}(Y))$, we have $g^{-1}(W)\in \mathcal H$. For any $x\in g^{-1}(W)$ (i.e.,  $g(x)\in W$), by $g(x)=a_{x}\in \overline{\{f(x): f\in \mathcal {F}\}}$, there exists $f_{0}\in \mathcal {F}$ such that $f_{0}(x)\in W$. Then $x\in f_{0}^{-1}(W)\subseteq \bigcup_{f\in \mathcal {F}}f^{-1}(W)$. So $g^{-1}(W)\subseteq \bigcup_{f\in \mathcal {F}}f^{-1}(W)$. As $\mathcal H$ is an upper set, we have $\bigcup_{f\in \mathcal {F}}f^{-1}(W)\in \mathcal H$.

\smallskip
Since $\Sigma \mathcal{O}(X)$ is $T_{0}$, one can directly deduce that $P_{H}(H_{c}^{d}(\Sigma \mathcal{O}(X)))$ is $T_{0}$. By Lemmas \ref{3yl9} and \ref{3yl10}, for each continuous function $f: X\longrightarrow Y$, $$\zeta_W:[X\rightarrow Y]_{I}\longrightarrow \Sigma \mathcal{O}(X),$$ $$\!~~~~~~~~~~~~~f\!~~~\mapsto \!~f^{-1}(W),$$ is continuous, and for any $U\in \mathcal{O}(X)$, $$\eta_{\Sigma \mathcal{O}(X)}^{h}: \Sigma \mathcal{O}(X)\longrightarrow P_{H}(H_{c}^{d}(\Sigma \mathcal{O}(X))),$$ $$\!~~~~~~~~~~~U\!~~\mapsto\!~~\cl_{\Sigma \mathcal{O}(X)}\{U\},$$ is continuous. So for each continuous function $f: X\longrightarrow Y$, $$\eta_{\Sigma \mathcal{O}(X)}^{h}\circ\zeta_W:[X\rightarrow Y]_{I}\longrightarrow P_{H}(H_{c}^{d}(\Sigma \mathcal{O}(X))),$$ $$\!~~~~~~~~~~~~~f\!~~~\mapsto \!~\cl_{\Sigma\mathcal{O}(X)}\{f^{-1}(W)\},$$ is continuous. Thus $\{\cl_{\Sigma\mathcal{O}(X)}\{f^{-1}(W)\}:f\in \mathcal {F}\}\in H(P_{H}(H_{c}^{d}(\Sigma \mathcal{O}(X))))$. By Lemma \ref{3yl10} again, $$\cl_{\Sigma\mathcal{O}(X)}\bigcup_{f\in \mathcal {F}}\cl_{\Sigma\mathcal{O}(X)}\{f^{-1}(W)\}=\cl_{\Sigma\mathcal{O}(X)}\{\bigcup_{f\in \mathcal {F}}f^{-1}(W)\}\in H_{c}^{d}(\Sigma\mathcal{O}(X))$$ and in $P_{H}(H_{c}^{d}(\Sigma \mathcal{O}(X)))$, $$\overline{\{\cl_{\Sigma\mathcal{O}(X)}\{f^{-1}(W)\}:f\in \mathcal {F}\}}=\overline{\{\cl_{\Sigma\mathcal{O}(X)}\{\bigcup_{f\in \mathcal {F}}f^{-1}(W)\}\}}.$$ As $\bigcup_{f\in \mathcal {F}}f^{-1}(W)\in \mathcal H$,  $\cl_{\Sigma\mathcal{O}(X)}\{\bigcup_{f\in \mathcal {F}}f^{-1}(W)\}\in \Diamond \mathcal H$. And since $\Diamond \mathcal H$ is open in $P_{H}(H_{c}^{d}(\Sigma \mathcal{O}(X)))$, we have that $\{\cl_{\Sigma\mathcal{O}(X)}\{f^{-1}(W)\}:f\in \mathcal {F}\}\bigcap \Diamond \mathcal H\neq \emptyset$. It follows that there exists $h\in \mathcal {F}$ such that $\cl_{\Sigma\mathcal{O}(X)}\{h^{-1}(W)\}\in \Diamond \mathcal H$. Then $\cl_{\Sigma\mathcal{O}(X)}\{h^{-1}(W)\} \bigcap \mathcal H\neq \emptyset$, implying that $h^{-1}(W)\in \mathcal H$. Therefore, $\mathcal {F}\bigcap N(\mathcal H\leftarrow W)\neq\emptyset$.

\smallskip
{\bf Claim 3:} $\cl_{[X\rightarrow Y]_{I}}\mathcal {F}=\cl_{[X\rightarrow Y]_{I}}\{g\}.$

\smallskip
As the specialization order on $[X\rightarrow Y]_{I}$ is the usually pointwise order on $\mathbb{C}(X, Y)$, $\cl_{[X\rightarrow Y]_{I}}\mathcal F\subseteq \cl_{[X\rightarrow Y]_{I}}\{g\}$ by the definition of $g$. On the other hand, by Claim 2 and $\mathcal {F}\in H([X\rightarrow Y]_{I})\subseteq \ir ([X\rightarrow Y]_{I})$, all basic open sets of $g$ in $[X\rightarrow Y]_{I}$ must meet $\mathcal {F}$. So $g\in \cl_{[X\rightarrow Y]_{I}}\mathcal {F}$ or, equivalently, $ \cl_{[X\rightarrow Y]_{I}}\{g\}\subseteq  \cl_{[X\rightarrow Y]_{I}}\mathcal {F}$. Thus $\cl_{[X\rightarrow Y]_{I}}\mathcal {F}=\cl_{[X\rightarrow Y]_{I}}\{g\}$.

\smallskip
By Claim 3, the function space $\mathbb{C}(X, Y)$ equipped with the Isbell topology is an H-sober space.
\end{proof}

By Proposition \ref{3dl7} and Theorem \ref{3dl11}, we get the following result.

\begin{theorem}\label{3dl12} For $T_0$ spaces $X$ and $Y$, the following two conditions are equivalent:
 \begin{enumerate}[\rm (1)]
 \item $Y$ is an H-sober space.
 \item The function space $\mathbb{C}(X, Y)$ equipped with the Isbell topology is an H-sober space.
\end{enumerate}
\end{theorem}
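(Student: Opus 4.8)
The plan is to assemble the equivalence directly from the two facts established immediately above, so that the proof amounts to one line in each direction. For $(1)\Rightarrow(2)$ I would invoke Theorem~\ref{3dl11}: if $Y$ is H-sober, then $\mathbb{C}(X,Y)$ equipped with the Isbell topology is H-sober, which is exactly the assertion there. For $(2)\Rightarrow(1)$ I would invoke Proposition~\ref{3dl7}: if $[X\rightarrow Y]_{I}$ is H-sober, then $Y$ is H-sober. Combining these two implications yields the stated equivalence, and nothing further is required.

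It is worth recalling how the backward direction is obtained, since that is what makes the ``if and only if'' non-trivial on its own: fixing any $x\in X$, the evaluation map $\varphi^{I}_x\colon[X\rightarrow Y]_{I}\to Y$, $f\mapsto f(x)$, is continuous by Corollary~\ref{3tl3}(2), and the constant-function map $\psi^{I}\colon Y\to[X\rightarrow Y]_{I}$, $y\mapsto c_y$, is continuous by Lemma~\ref{3yl4}; since $\varphi^{I}_x\circ\psi^{I}=\mathrm{id}_{Y}$, the space $Y$ is a retract of $[X\rightarrow Y]_{I}$, and retracts of H-sober spaces are H-sober by Lemma~\ref{3mt6}. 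So the only genuine content in Theorem~\ref{3dl12} sits inside Theorem~\ref{3dl11}.

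Accordingly, I do not expect any obstacle in this final step. The real difficulty, were one to re-prove Theorem~\ref{3dl11}, lies in verifying that the pointwise-defined candidate limit $g$ (with $\overline{\{g(x)\}}=\overline{\{f(x):f\in\mathcal{F}\}}$ for each $x$, after checking $g$ is continuous) actually lies in $\cl_{[X\rightarrow Y]_{I}}\mathcal{F}$, i.e.\ that every subbasic Isbell-open set $N(\mathcal H\leftarrow W)$ containing $g$ meets $\mathcal{F}$; that step transports $\mathcal{F}$ through the continuous map $f\mapsto f^{-1}(W)$ into $\Sigma\mathcal{O}(X)$ and then into the H-sobrification $P_{H}(H^{d}_{c}(\Sigma\mathcal{O}(X)))$, and uses Lemma~\ref{3yl10} to locate the H-limit there. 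For Theorem~\ref{3dl12} itself, however, this may be treated as a black box, so the write-up is short.
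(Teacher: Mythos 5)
Your proposal is correct and matches the paper exactly: the paper derives Theorem~\ref{3dl12} by combining Theorem~\ref{3dl11} for the forward direction with Proposition~\ref{3dl7} for the converse, which is precisely your argument. The additional recap of the retract argument behind Proposition~\ref{3dl7} is accurate but not needed, since that result is already established.
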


As the $d$-spaces, sober spaces and well-filtered spaces are three special types of H-sober spaces, by Theorem \ref{3dl12}, we get the following corollary.

\begin{corollary}\label{3dl13} For $T_0$ spaces $X$ and $Y$, the following two conditions are equivalent:
 \begin{enumerate}[\rm (1)]
 \item $Y$ is a sober space (resp., $d$-space, well-filtered space).
 \item The function space $\mathbb{C}(X, Y)$ equipped with the Isbell topology is a sober space (resp., $d$-space, well-filtered space).
\end{enumerate}
\end{corollary}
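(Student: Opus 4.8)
The plan is to obtain this corollary as an immediate specialization of Theorem \ref{3dl12}, by instantiating the R-subset system $H$ at each of the three standard examples $\mathcal D$, $\mathcal R$ and $\mathsf{WD}$ (equivalently $\mathsf{RD}$). The only preliminary observation needed is the dictionary, already recorded in Sections 2--3, between the named classes of $T_0$ spaces and the corresponding notions of H-soberness: by Proposition \ref{2mt1}, a $T_0$ space is a $d$-space iff it is $\mathcal D$-sober; by the definition of sober together with the fact that $A\in\ir(X)$ iff $\overline A\in\ir_c(X)$, a $T_0$ space is sober iff it is $\mathcal R$-sober; and by Proposition \ref{2mt8}, a $T_0$ space is well-filtered iff it is $\mathsf{WD}$-sober iff it is $\mathsf{RD}$-sober. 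Recall also that $\mathcal D$, $\mathcal R$ and $\mathsf{WD}$ (and $\mathsf{RD}$) are genuine R-subset systems, as noted after Definition \ref{2dy9} using Propositions \ref{2mt6} and \ref{2mt7}, so Theorem \ref{3dl12} is applicable to each of them.

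First I would fix $H=\mathcal R$. Theorem \ref{3dl12} then reads: $Y$ is $\mathcal R$-sober iff $[X\rightarrow Y]_I$ is $\mathcal R$-sober; by the dictionary above this is exactly the assertion that $Y$ is sober iff $\mathbb{C}(X,Y)$ with the Isbell topology is sober. Running the same argument with $H=\mathcal D$ gives the $d$-space case, and with $H=\mathsf{WD}$ (equivalently $H=\mathsf{RD}$) gives the well-filtered case. The three equivalences then combine verbatim into the statement of the corollary. No further topology on $\mathbb{C}(X,Y)$ needs to be analysed, since all the work is encapsulated in Theorem \ref{3dl12} (whose forward direction is Theorem \ref{3dl11} and whose reverse direction is Proposition \ref{3dl7}).

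I do not expect any real obstacle here: the content is entirely contained in Theorem \ref{3dl12}, and the proof amounts to unwinding the definitions. The one point worth a parenthetical remark is that the "unique point" clauses in the definitions of $d$-space, sober space and well-filtered space are automatic once the ambient space is $T_0$; and $[X\rightarrow Y]_I$ is indeed $T_0$ because its specialization order is the pointwise order on $\mathbb{C}(X,Y)$, as established just before Lemma \ref{3yl1}. With this in hand the corollary follows.
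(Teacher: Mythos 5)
Your proposal is correct and matches the paper's own argument, which obtains the corollary by specializing Theorem \ref{3dl12} to the R-subset systems $\mathcal D$, $\mathcal R$ and $\mathsf{WD}$ (or $\mathsf{RD}$), using exactly the dictionary you cite between these and $d$-spaces, sober spaces and well-filtered spaces. Your additional remarks on the $T_0$-ness of $[X\rightarrow Y]_I$ and the uniqueness clauses are fine but not needed beyond what the paper already records.
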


In \cite[Corollary 3.12]{LLH}, it was proved that for a core compact space $X$, if $Y$ is well-filtered, then the function space $\mathbb{C}(X, Y)$ equipped with the Isbell topology is a well-filtered space. For the well-filteredness, Corollary \ref{3dl13} improves this conclusion by removing the unnecessary condition that $X$ is core compact.

Finally, we investigate the Scott topology on $\mathbb{C}(X, Y)$.

\begin{lemma}\label{3dl16} (\cite[Lemma II-3.14 and Proposition II-3.15]{GHKLMS})
Let $X$ be a $T_0$ space and $Y$ a $d$-space. Then
\begin{enumerate}[\rm (1)]
\item if $(f_d)_{d\in D}$ is a net of continuous functions $f_d : X\rightarrow Y$ such that $(f_d(x))_{d\in D}$ is a
directed net of $Y$ (with the specialization order) for each $x\in X$, then the pointwise sup $f : X\rightarrow Y$ of the net
$f_d$ is continuous.
\item the subset $\mathbb{C}(X, Y)$ of
$(\Omega Y)^X$ is closed under directed sups and hence $\mathbb{C}(X, Y)$ is a dcpo.
\end{enumerate}
\end{lemma}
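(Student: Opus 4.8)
The plan is to establish (1) directly and then obtain (2) as a formal consequence. For (1), define $f\colon X\to Y$ by $f(x)=\bigvee_{d\in D}f_d(x)$; this is well defined because, by hypothesis, $\{f_d(x):d\in D\}$ is a directed subset of $Y$ for each $x\in X$, and $Y$, being a $d$-space, is a dcpo in its specialization order. To verify that $f$ is continuous, I would fix $V\in\mathcal O(Y)$ and a point $x\in X$ with $f(x)\in V$. Here is the only place the $d$-space hypothesis enters: since $\mathcal O(Y)\subseteq\sigma(Y)$, the set $V$ is Scott open, so from $\bigvee_{d\in D}f_d(x)\in V$ we get $f_{d_0}(x)\in V$ for some $d_0\in D$. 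Continuity of the single function $f_{d_0}$ then yields $U\in\mathcal O(X)$ with $x\in U$ and $f_{d_0}(U)\subseteq V$; and for every $z\in U$ we have $f_{d_0}(z)\leq f(z)$ with $f_{d_0}(z)\in V$, so $f(z)\in V$ because $V$ is an upper set. Hence $U\subseteq f^{-1}(V)$, which shows $f^{-1}(V)\in\mathcal O(X)$ and so $f$ is continuous.

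For (2), recall that $\Omega Y$ is a dcpo and that arbitrary powers of a dcpo are again dcpos with directed suprema computed coordinatewise, so $(\Omega Y)^X$ is a dcpo. Given a directed family $\mathcal F\subseteq\mathbb C(X,Y)$ in the pointwise order, I would view it as a monotone net indexed by itself; for each $x\in X$ the family $\{g(x):g\in\mathcal F\}$ is directed in $Y$ because evaluation at $x$ is order preserving. Part (1) then applies and shows that the pointwise supremum $g_0$ of $\mathcal F$, taken in $(\Omega Y)^X$, is continuous, i.e. $g_0\in\mathbb C(X,Y)$. Thus $\mathbb C(X,Y)$ is closed under directed suprema in $(\Omega Y)^X$; since $g_0$ is the least upper bound of $\mathcal F$ in $(\Omega Y)^X$ and lies in $\mathbb C(X,Y)$, it is a fortiori the least upper bound of $\mathcal F$ in $\mathbb C(X,Y)$, so $\mathbb C(X,Y)$ is a dcpo.

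I do not expect any real obstacle here: the whole argument is routine once one notices that the role of the $d$-space hypothesis is exactly to make open subsets of $Y$ Scott open, which is what lets the supremum ``pass inside'' an open set. The remaining steps — exchanging the order of suprema and using that open sets are upper sets — are purely formal, and (2) is nothing more than repackaging (1) with $\mathcal F$ taken as its own index net.
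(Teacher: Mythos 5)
Your argument is correct; note that the paper itself gives no proof of this lemma, citing it directly from \cite[Lemma II-3.14 and Proposition II-3.15]{GHKLMS}, and your proof is essentially the standard one found there: the $d$-space hypothesis makes every $V\in\mathcal O(Y)$ Scott open, so $f^{-1}(V)=\bigcup_{d\in D}f_d^{-1}(V)$ is open, and (2) follows by applying (1) to a directed family viewed as a net indexed by itself. No gaps.
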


\begin{corollary}\label{3dl16++}
For a $T_0$ $X$ and a $d$-space $Y$, the function space $\mathbb{C}(X, Y)$ equipped with the Scott topology is a $d$-space.
\end{corollary}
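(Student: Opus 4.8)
The plan is to reduce the statement to facts already established in the excerpt. By Lemma~\ref{3dl16}(2), $\mathbb{C}(X,Y)$ is a dcpo under the pointwise order, so the underlying poset of $[X\rightarrow Y]_{\Sigma}=\Sigma\mathbb{C}(X,Y)$ is a dcpo, and any Scott space of a dcpo is automatically a $d$-space (as noted right after the definition of $d$-space in Section~2: ``for a dcpo $P$, $\Sigma P$ is a $d$-space''). So there is essentially nothing left to prove: the result is immediate from Lemma~\ref{3dl16}(2) together with the basic observation that $\Sigma P$ is a $d$-space for every dcpo $P$.

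Concretely, I would argue as follows. First, invoke Lemma~\ref{3dl16}(2) to get that $\mathbb{C}(X,Y)$, ordered pointwise, is a dcpo; note that this pointwise order is exactly the specialization order $\leq_{[X\rightarrow Y]_{\Sigma}}$ by the remarks preceding Lemma~\ref{3yl1}. Hence $[X\rightarrow Y]_{\Sigma}$ is the Scott space of a dcpo. Second, recall that $\mathcal{O}(\Sigma P)=\sigma(P)$ for any poset $P$, so trivially $\mathcal{O}([X\rightarrow Y]_{\Sigma})\subseteq\sigma(\mathbb{C}(X,Y))$, and since the specialization order makes $\mathbb{C}(X,Y)$ a dcpo, the definition of $d$-space is satisfied. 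Therefore $[X\rightarrow Y]_{\Sigma}$ is a $d$-space.

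There is no real obstacle here; the corollary is a packaging of Lemma~\ref{3dl16}. The only point that deserves a word is the verification that the specialization order of the Scott topology on $\mathbb{C}(X,Y)$ coincides with the pointwise order used in Lemma~\ref{3dl16}, and this is exactly the chain of equivalences displayed before Lemma~\ref{3yl1}. One may also remark, for context, that Corollary~\ref{3dl16++} can alternatively be derived from Proposition~\ref{3dl16+} and Lemma~\ref{3yl1}: since the Scott topology on $\mathbb{C}(X,Y)$ is finer than the Isbell topology (which is finer than the compact-open and pointwise topologies), and all four topologies induce the same specialization order, the $d$-space property of the coarser Isbell topology together with directed-completeness of $\mathbb{C}(X,Y)$ forces the Scott space to be a $d$-space as well. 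Either route gives a one-line proof.
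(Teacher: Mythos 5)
Your proof is correct and takes the same route as the paper, which states the corollary as an immediate consequence of Lemma~\ref{3dl16}(2) (so that $\mathbb{C}(X,Y)$ with the pointwise order, i.e.\ the specialization order of $[X\rightarrow Y]_{\Sigma}$, is a dcpo) combined with the Section~2 remark that $\Sigma P$ is a $d$-space for every dcpo $P$. Nothing further is needed.
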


The following example shows that for sober spaces, Corollary \ref{3dl16++} does not hold in general.

\begin{example}\label{plus4} Let $X=1$ be the topological space with only one point and $L$ the complete lattice constructed by Isbell in \cite{Isbell}. It is well-known that $\Sigma L$ is non-sober. Let $Y=(L, \upsilon (L))$. Then by \cite[Corollary 4.10]{ZhaoHo} or \cite[Proposition 2.9]{XSXZ}, $Y$ is sober. Clearly,  $\mathbb{C}(X, Y)=\{c_y : y\in Y\}$ and $c_y\mapsto y : [X\rightarrow Y]_{\Sigma}\rightarrow \Sigma \Omega Y=\Sigma L$ is a homeomorphism. So $Y$ is sober but the function space $\mathbb{C}(X, Y)$ equipped with the Scott topology is non-sober.

\end{example}

It is still not known whether Corollary \ref{3dl16++} holds for well-filtered spaces. That is, we have the following question.

\begin{question} \label{3wt3}
For a $T_{0}$ space $X$ and a well-filtered space $Y$, is the function space $\mathbb{C}(X, Y)$ equipped with the Scott topology well-filtered?
\end{question}

Let $\mathbb{J}=\mathbb{N}\times (\mathbb{N}\cup \{\omega\})$ with ordering defined by $(j, k)\leq (m, n)$ if{}f $j = m$ and $k \leq n$, or $n =\omega$ and $k\leq m$. $\mathbb{J}$ is a well-known dcpo constructed by Johnstone in \cite{johnstone-81} which is not sober in its Scott topology.

\begin{figure}[ht]
	\centering
	\includegraphics[height=4.5cm,width=4.5cm]{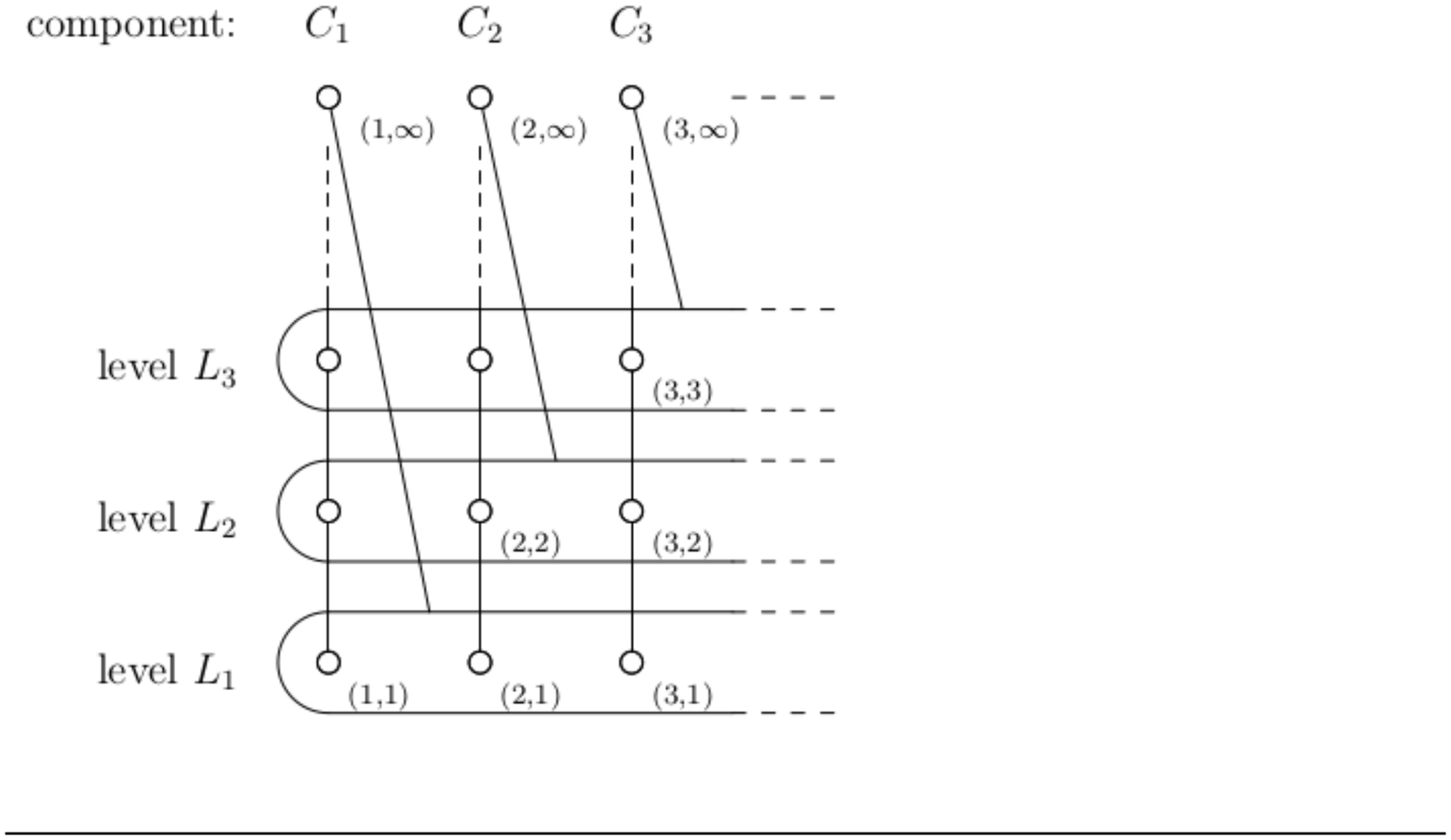}
	\caption{Johnstone's dcpo $\mathbb{J}$}
\end{figure}

\begin{proposition}\label{plus5} There is no well-filtered topology on $\mathbb{J}$ which has the given order as its specialization order, namely, for any topology $\upsilon (\mathbb{J})\subseteq \tau \subseteq \sigma (\mathbb{J})$, $(\mathbb{J}, \tau)$ is not well-filtered.
\end{proposition}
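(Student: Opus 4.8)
The plan is to produce, \emph{uniformly in} $\tau$, a filtered family of nonempty compact saturated subsets of $(\mathbb{J},\tau)$ whose intersection is empty; since $\emptyset$ is open, this contradicts the definition of well-filteredness. The family will consist of the ``tails of the top row'' of $\mathbb{J}$.

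First I would pin down the specialization order. For any $\tau$ with $\upsilon(\mathbb{J})\subseteq\tau\subseteq\sigma(\mathbb{J})$, the specialization order of $(\mathbb{J},\tau)$ lies between that of $(\mathbb{J},\upsilon(\mathbb{J}))$ and that of $\Sigma\mathbb{J}$, and both of these are the given order on $\mathbb{J}$; hence the specialization order of $(\mathbb{J},\tau)$ is exactly the given order. Consequently every upper set of $\mathbb{J}$ is saturated in $(\mathbb{J},\tau)$, and — $\tau$ being coarser than $\sigma(\mathbb{J})$ — every subset of $\mathbb{J}$ that is compact in $\Sigma\mathbb{J}$ is compact in $(\mathbb{J},\tau)$. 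So it suffices to work inside $\Sigma\mathbb{J}$ and transfer.

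Next, write $C_j=\{(j,k):k\in\mathbb{N}\}$; this is a chain with $\bigvee C_j=(j,\omega)$. For $n\in\mathbb{N}$ set $K_n=\{(m,\omega):m\geq n\}$. Each $K_n$ consists of maximal elements of $\mathbb{J}$, hence is an upper set, hence saturated in $(\mathbb{J},\tau)$; the $K_n$ form a descending chain, hence a filtered family in the Smyth order; and $\bigcap_{n\in\mathbb{N}}K_n=\emptyset$. The only point that requires an argument is that each $K_n$ is compact in $\Sigma\mathbb{J}$: given a Scott-open cover $\mathcal{U}$ of $K_n$, choose $U\in\mathcal{U}$ with $(n,\omega)\in U$; since $(n,\omega)=\bigvee C_n$ with $C_n$ directed and $U$ is Scott open, $U$ meets $C_n$, say $(n,k_0)\in U$, whence $\uparrow(n,k_0)\subseteq U$ and therefore $(m,\omega)\in U$ for all $m\geq k_0$; thus $K_n\setminus U\subseteq\{(m,\omega):n\leq m<k_0\}$ is finite and can be covered by finitely many further members of $\mathcal{U}$. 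Hence $K_n\in\mathsf{K}(\mathbb{J},\tau)$ for every $n$.

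Finally I would apply the definition of well-filteredness to the open set $U=\emptyset$ and the filtered family $\{K_n:n\in\mathbb{N}\}\subseteq\mathsf{K}(\mathbb{J},\tau)$: from $\bigcap_{n}K_n=\emptyset\subseteq\emptyset$ it would follow that $K_n\subseteq\emptyset$, i.e.\ $K_n=\emptyset$, for some $n$, contradicting $(n,\omega)\in K_n$. Hence $(\mathbb{J},\tau)$ is not well-filtered for any $\tau$ with $\upsilon(\mathbb{J})\subseteq\tau\subseteq\sigma(\mathbb{J})$. The only mildly delicate step is the compactness of the tails $K_n$ in $\Sigma\mathbb{J}$, and this rests entirely on the defining feature of Johnstone's dcpo that each $(n,\omega)$ is the directed supremum of the chain $C_n$; everything else is routine bookkeeping with the specialization order.
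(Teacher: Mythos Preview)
Your proof is correct and follows essentially the same approach as the paper: both exhibit a filtered family of nonempty compact saturated subsets of $\mathbb{J}_{max}$ with empty intersection, using that compactness transfers from $\Sigma\mathbb{J}$ to any coarser $\tau$ while saturation is preserved because the specialization order is unchanged. The only difference is that the paper invokes the description $\mathsf{K}(\Sigma\mathbb{J})=(2^{\mathbb{J}_{max}}\setminus\{\emptyset\})\cup\mathbf{Fin}\,\mathbb{J}$ from \cite{LL} and uses all cofinite subsets of $\mathbb{J}_{max}$, whereas you work with the chain of tails $K_n$ and supply a direct, self-contained compactness argument.
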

\begin{proof}

The set $\mathbb{J}_{max}=\{(n, \infty) : n\in\mn \}$ is the set of all maximal elements of $\mathbb{J}$ and $\mathsf{K}(\Sigma~\!\!\mathbb{J})=(2^{\mathbb{J}_{max}} \setminus \{\emptyset\})\bigcup \mathbf{Fin}~\!\mathbb{J}$ (see \cite[Example 3.1]{LL}). Since $\upsilon (\mathbb{J})\subseteq \tau \subseteq \sigma (\mathbb{J})$, we have that $2^{\mathbb{J}_{max}}\setminus \{\emptyset\}\subseteq \mathsf{K}((\mathbb{J}, \tau))$. Let $\mathcal K=\{\mathbb{J}_{max}\setminus F : F\in (\mathbb{J}_{max})^{(<\omega)}\}$. Then $\mathcal K\subseteq \mathsf{K}((\mathbb{J}, \tau))$ is a filtered family and $\bigcap\mathcal{K}=\bigcap_{F\in (\mathbb{J}_{max})^{(<\omega)}} (\mathbb{J}_{max}\setminus F)=\mathbb{J}_{max}\setminus \bigcup (\mathbb{J}_{max})^{(<\omega)})=\emptyset$, but $\mathbb{J}_{max}\setminus F\neq\emptyset$ for all $F\in (\mathbb{J}_{max})^{(<\omega)}$. Therefore, $(\mathbb{J}, \tau)$ is not well-filtered.
\end{proof}

\begin{corollary}\label{plus6} (\cite[Exercise II-3.16 (V)]{GHKLMS}) There is no sober topology on $\mathbb{J}$  which has the given order as its specialization order.
\end{corollary}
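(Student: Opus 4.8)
The plan is to obtain Corollary \ref{plus6} directly from Proposition \ref{plus5}, using two standard facts: that a sober space is well-filtered, and that a sober topology whose specialization order is prescribed must be squeezed between the upper topology and the Scott topology of that order.

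First I would argue by contradiction: suppose $\tau$ is a sober topology on the set $\mathbb{J}$ whose specialization order is the given Johnstone order. Since $(\mathbb{J},\tau)$ is $T_0$, the $\tau$-closure of a singleton $\{x\}$ equals $\da x$, so every principal ideal $\da x$ is $\tau$-closed and its complement is $\tau$-open; hence $\upsilon(\mathbb{J})\subseteq\tau$. For the other inclusion, note that a sober space is $\mathcal R$-sober, and since $\mathcal D\leq \mathcal R$ it is $\mathcal D$-sober, i.e.\ a $d$-space (by Proposition \ref{2mt1}, or directly since every directed set is irreducible); by the definition of a $d$-space this forces $\mathcal O(\mathbb{J},\tau)\subseteq \sigma(\mathbb{J})$, that is $\tau\subseteq\sigma(\mathbb{J})$. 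Thus $\upsilon(\mathbb{J})\subseteq\tau\subseteq\sigma(\mathbb{J})$.

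Now Proposition \ref{plus5} applies to $(\mathbb{J},\tau)$ and tells us that $(\mathbb{J},\tau)$ is not well-filtered. On the other hand, a sober space is well-filtered: being $\mathcal R$-sober and $\mathsf{WD}\leq \mathcal R$, it is $\mathsf{WD}$-sober, which by Proposition \ref{2mt8} is precisely well-filteredness. This contradiction shows no such $\tau$ exists, which is exactly the assertion. There is no real obstacle in this argument; the only points requiring a line of justification are the two inclusions $\upsilon(\mathbb{J})\subseteq\tau\subseteq\sigma(\mathbb{J})$ — the first from $T_0$-ness together with the prescribed specialization order, the second from soberness forcing the $d$-space property — after which the result is immediate from Proposition \ref{plus5} and the hierarchy $\mathcal D\leq\mathsf{WD}\leq\mathcal R$.
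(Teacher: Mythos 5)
Your proof is correct and follows exactly the route the paper intends: a sober topology with the prescribed specialization order is necessarily squeezed between $\upsilon(\mathbb{J})$ and $\sigma(\mathbb{J})$ (the first inclusion from $T_0$-ness, the second because sober implies $d$-space), so Proposition \ref{plus5} applies, and sobriety would force well-filteredness, a contradiction. The paper leaves these routine verifications implicit; you have simply supplied them.
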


\begin{question} \label{3wt0}
Is there a well-filtered space $Y$ such that $\Sigma Y$ (i.e., $\Sigma \Omega Y$) is not well-filtered? Or equivalently, is there a dcpo $P$ and a topology $\upsilon (P)\subseteq \tau\subseteq \sigma (P)$ such that $(P, \tau)$ is well-filtered but $(P, \sigma (P))$ is not well-filtered?
In particular, is there a dcpo $P$ such that $(P, \upsilon (P))$ is well-filtered but $(P, \sigma (P))$ is not well-filtered?
\end{question}

If the answer of Question \ref{3wt0} is ``Yes", then the answer of Question \ref{3wt3} is ``No"! Indeed, suppose that $Y$ is a well-filtered space for which the Scott space $\Sigma Y$ is not well-filtered and $X$ is the topological space with only one point. Then the function space $\mathbb{C}(X, Y)$ equipped with the Scott topology is not well-filtered since $[X\rightarrow Y]_{\Sigma}$ and $\Sigma \Omega Y$ are homeomorphic (see Example \ref{plus4}).

Conversely, for $\mathcal R\geq {\rm H}\geq \mathcal D$ (in particular, ${\rm H}=\mathcal D, \mathcal R, \mathsf{WD}$ or $\mathsf {RD}$), the following example shows that the H-sobriety of function space $\mathbb{C}(X, Y)$ equipped with the Scott topology does not implies the H-sobriety of $Y$ in general.

\begin{example}\label{xu1} Let $X$ be the topological space with only one point and $[0, 1]$ the unit closed interval with the usual order of reals. Then $\sigma ([0, 1])\neq \gamma ([0, 1])$ (note that $\{1\}\in \gamma ([0, 1])$ but $\{1\}\not\in \sigma ([0, 1])$). Clearly, the Alexandroff space $\Gamma [0, 1]$ is not a $d$-space (since $\gamma ([0, 1])\nsubseteq\sigma ([0, 1])$) and the Scott space $\Sigma [0, 1]$ is sober, whence $\Sigma [0, 1]$ is H-sober and $\Gamma [0, 1]$ is not H-sober by $\mathcal R\geq {\rm H}\geq \mathcal D$. As $[X\rightarrow \Gamma [0, 1]]_{\Sigma}$ is homeomorphic to $\Sigma \Omega (\Gamma [0, 1])=\Sigma [0, 1]$ (see Example \ref{plus4}), the function space $\mathbb{C}(X, \Gamma [0, 1])$ equipped with the Scott topology is H-sober but $\Gamma [0, 1]$ is not H-sober. \end{example}

\end{document}